\newcommand{\mycomment}[1]{}
\newcommand{\new}[1]{{\color{blue} #1}}
\DeclareMathOperator{\ann}{ann}
\DeclareMathOperator{\Ass}{Ass}
\DeclareMathOperator{\depth}{depth}
\DeclareMathOperator{\Ext}{Ext}
\DeclareMathOperator{\fgmod}{mod}
\DeclareMathOperator{\grade}{grade}
\DeclareMathOperator{\Hom}{Hom}
\DeclareMathOperator{\id}{id}
\DeclareMathOperator{\im}{im}
\DeclareMathOperator{\pd}{pd}
\DeclareMathOperator{\Proj}{Proj}
\DeclareMathOperator{\Supp}{Supp}
\DeclareMathOperator{\Tor}{Tor}
\renewcommand{\ge}{\geqslant}
\renewcommand{\le}{\leqslant}
\newcommand{\ds}{\displaystyle}
\newcommand{\bn}{\mathbb{N}}
\newcommand{\bz}{\mathbb{Z}}
\newcommand{\fm}{\mathfrak{m}}
\newcommand{\sm}{\mathscr{M}}
\newcommand{\xlra}{\xlongrightarrow}
\newcommand{\ud}{\underline{d}}
\newcommand{\un}{\underline{n}}
\newcommand{\um}{\underline{m}}
\newcommand{\uc}{\underline{c}}
\theoremstyle{plain}
\newtheorem{theorem}{Theorem}[section]
\newtheorem{lemma}[theorem]{Lemma}
\newtheorem{proposition}[theorem]{Proposition}
\theoremstyle{definition}
\newtheorem{definition}[theorem]{Definition}
\newtheorem{example}[theorem]{Example}
\newtheorem{notations}[theorem]{Notations}
\newtheorem{notation}[theorem]{Notation}
\newtheorem{question}[theorem]{Question}
\newtheorem{setup}[theorem]{Setup}
\theoremstyle{remark}
\newtheorem{remark}[theorem]{Remark}
\numberwithin{equation}{section}
\title[Coherent functors, powers of ideals, and asymptotic stability]{Coherent functors, powers of ideals, and asymptotic stability}
\author[S.~Dey]{Souvik Dey}
\address{Department of Algebra, Faculty of Mathematics and Physics, Charles University in Prague, Sokolovsk\'{a} 83, 186 75 Praha, Czech Republic}
\email{souvik.dey@matfyz.cuni.cz} 
\urladdr{\url{https://orcid.org/0000-0001-8265-3301}}
\author[D.~Ghosh]{Dipankar Ghosh}
\address{Department of Mathematics, Indian Institute of Technology Kharagpur, West Bengal - 721302, India}
\email{dipankar@maths.iitkgp.ac.in, dipug23@gmail.com}
\urladdr{\url{https://orcid.org/0000-0002-3773-4003}}
\author[S.~Pramanik]{Siddhartha Pramanik}
\address{Department of Mathematics, Indian Institute of Technology Kharagpur, West Bengal - 721302, India}
\email{siddharthap@kgpian.iitkgp.ac.in, pramaniksiddhartha2@gmail.com}
\author[T.J.~Puthenpurakal]{Tony J. Puthenpurakal}
\address{Department of Mathematics, Indian Institute of Technology Bombay, Powai, Mumbai 400076, India}
\email{tputhen@math.iitb.ac.in}
\author[S.~Sahoo]{Samarendra Sahoo}
\address{Department of Mathematics, Indian Institute of Technology Bombay, Powai, Mumbai 400076, India}
\email{204093008@iitb.ac.in}
\date{\today}
\subjclass[2020]{Primary 13D07, 13A15; Secondary 13A02, 13D02}
\keywords{Coherent functors; Graded rings and modules; Associate primes; Hilbert functions, Betti and Bass numbers}
\begin{document}
\pagenumbering{arabic}
\thispagestyle{empty}
    
\begin{abstract}
Let $R$ be a Noetherian ring, $I_1,\ldots,I_r$ be ideals of $R$, and $N\subseteq M$ be finitely generated $R$-modules. Let $S = \bigoplus_{\un \in \bn^r} S_{\un}$ be a Noetherian standard $\bn^r$-graded ring with $S_{\underline{0}} = R$, and $\sm $ be a finitely generated $\bz^r$-graded $S$-module. For $ \un = (n_1,\dots,n_r) \in \bn^r$, set $G_{\un} := \sm_{\un}$ or $G_{\un} := M/{\bf I}^{\un} N$, where ${\bf I}^{\un} = I_1^{n_1} \cdots I_r^{n_r}$.
Suppose $F$ is a coherent functor on the category of finitely generated $R$-modules. We prove that the set $\Ass_R \big(F(G_{\un}) \big)$ of associate primes and $\grade\big(J, F(G_{\un})\big)$ stabilize for all $\un \gg 0$, where $J$ is a non-zero ideal of $R$. Furthermore, if the length $\lambda_R(F(G_{\un}))$ is finite for all $\un \gg 0$, then there exists a polynomial $P$ in $r$ variables over $\mathbb{Q}$ such that $\lambda_R(F(G_{\un})) = P(\un)$ for all $\un\gg 0$. When $R$ is a local ring, and $G_{\un} = M/{\bf I}^{\un} N$, we give a sharp upper bound of the total degree of $P$.
As applications, when $R$ is a local ring, we show that for each fixed $i \ge 0$, the $i$th Betti number $\beta_i^R(F(G_{\un}))$ and Bass number $\mu^i_R(F(G_{\un}))$ are given by polynomials in $\un$ for all $\un \gg 0$. Thus, in particular, the projective dimension $\pd_R(F(G_{\un}))$ (resp., injective dimension $\id_R(F(G_{\un}))$) is constant for all $\un\gg 0$.
\end{abstract}
\maketitle

\section{Introduction}
Let $R$ be a (commutative) Noetherian ring, and let $\fgmod(R)$ denote the category of finitely generated $R$-modules.
Let $X$ be a nonempty set, and $\{ x_{\lambda} \}_{\lambda \in \Lambda}$ be a sequence of elements in $X$ indexed by a partially ordered set $\Lambda$. We say that the sequence $\{ x_{\lambda} \}_{\lambda \in \Lambda}$ `becomes stable for all large $\lambda$' or it `stabilizes asymptotically' if there exists some $\lambda_0 \in \Lambda$ such that $x_{\lambda} = x_{\lambda_0}$ for all $\lambda \ge \lambda_0$. In \cite{B79}, Brodmann paved the path for studying asymptotic stability of associate primes by proving the sets $\Ass_R(I^n M/I^{n+1} M)$ and $\Ass_R(M/I^nM)$ stabilize for all $n\gg 0$, where $M \in \fgmod(R)$ and $I$ is an ideal of $R$. In \cite{Brodmann79}, Brodmann proved that if $J$ is an ideal of $R$, then both $\grade(J, I^n M/ I^{n+1} M)$ and $\grade(J, M/ I^{n} M)$ stabilize asymptotically. Over the years, many generalizations of these results have happened in different forms. Also, these results of Brodmann inspired researchers to study the asymptotic behaviour of other algebraic invariants. Here, we present a few findings that motivated us to write this article.

In \cite[Thm.~1]{MS93}, Melkersson-Schenzel proved that for each fixed $i \ge 0$, the sets $\Ass_R(\Tor_i^R(M,R/I^n))$ and $\Ass_R(\Tor_i^R(M,I^n/I^{n+1}))$ become independent of $n$ for all $n \gg 0$. In \cite[Thm.~1.5]{KS96}, Kingsbury-Sharp showed that if $I_1, \dots, I_r$ are ideals of $R$, and $N , N' \in \fgmod(R)$ with $N' \subseteq N$, then the set $\Ass_R(N/I_1^{n_1}\cdots I_r^{n_r} N')$ becomes stable for all large $(n_1,\ldots,n_r)$. Later, in \cite[Thm.~3.5.(i) and Cor.~3.10.(i)]{West04}, West recovered the result of Kingsbury-Sharp in a direct way, and also showed that if $J$ is an ideal of $R$, then $\grade(J, N/I_1^{n_1}\cdots I_r^{n_r}N)$ stabilizes asymptotically. Extending the result on associate primes, Katz-West in \cite[Cor.~3.5]{KW04} proved that for each fixed $i \ge 0$, the sets $\Ass_R(\Ext_R^i(M, N/I_1^{n_1}\cdots I_r^{n_r}N'))$ and $\Ass_R(\Tor_i^R(M, N/I_1^{n_1}\cdots I_r^{n_r}N'))$ become stable for all large $(n_1,\ldots,n_r)$. See \cite{GP19} for a concise proof of the result of Katz-West.

In \cite[Thm.~2]{Ko93}, Kodiyalam showed that if the length $\lambda_R(M\otimes_RN)$ is finite, then for every fixed $i\ge 0$, the functions $\lambda_R(\Ext_R^i(M, N/I^nN))$ and $\lambda_R(\Tor_i^R(M, N/I^nN))$ both become polynomials in $n$ for all $n\gg 0$. Theodorescu, in \cite[Cor.~4]{Th02}, proved the same results with a weaker condition, and
gave sharp bounds of the degree of these polynomials.

Researchers also gave a far-reaching generalization of Brodmann's results in the context of coherent functors. Before we come to those results, we recall the definition of coherent functors. For $M \in \fgmod(R)$, denote the covariant functor $h_M(-) : = \Hom_R(M,-)$.

\begin{definition}(\cite[p.~189]{A66} and \cite[p.~53]{H98})
    A covariant $R$-linear functor F on $\fgmod(R)$ is said to be coherent if there exists an exact sequence $ h_K \to h_M \to F \to 0$ of functors for some finitely generated $R$-modules $K$ and $M$.
\end{definition}

In the setting of Abelian categories, Auslander introduced coherent functors in \cite{A66}. Later, Hartshorne in \cite{H98} studied coherent functors on the category $\fgmod(R)$ in detail. The following examples of coherent functors are known in the literature.

\begin{example}
Let $M \in \fgmod(R)$, and ${\bf C_{\bullet}}$ be a chain complex of finitely generated $R$-modules.
\begin{enumerate}[\rm (1)]
    \item \cite[Exam.~2.3.(a)]{H98}
    For each $i \in \bz$, the $i$th homology functor $H_i({\bf C_{\bullet}} \otimes_{R}-)$ is coherent.
    \item {\cite[Exam.~2.4 and 2.5]{H98}}
    For each $i \ge 0$, the functors $\Tor_i^R(M,-)$ and $\Ext_R^i(M,-)$ are coherent.
    \item \cite[Cor.~3.3 and Rmk.~1.10]{T17}
    Let $I$ be an ideal of $R$. Then the $0$th local cohomology functor $H^0_I(-)$ with support in $I$ is coherent if and only if $I^n = I^{n+1}$ for some $ n \ge 0$.
    \item \cite[Def.~3.9 and Cor.~3.12]{T17}
    Let $S$ be a common multiplicatively closed subset of $R$. Then the torsion functor $\tau_S(-)$ is coherent if and only if $S$ is coprincipal.
    \item \cite[Thm.~3.4]{BM19}
    If $F$ and $G$ are two coherent functors on $\fgmod(R)$, then so is the composition $G\circ F$.
    \end{enumerate}
\end{example}

For a coherent functor $F$ on $\fgmod(R)$, Tony Se in \cite[Thm.~1.11]{T17} proved that the sets of associate primes $\Ass_R(F(I^n M/ I^{n+1} M))$ and $\Ass_R(F(M/ I^n M))$ and the numerical sequences $\grade(J, F(I^n M/ I^{n+1} M))$ and $\grade(J, F(M/ I^n M))$ stabilize asymptotically. In \cite[Thms.~2.5 and 3.5]{BM19}, Banda-Melkersson gave simple proofs of these results, mainly by obtaining some properties of coherent functors. Moreover, they showed that if $\lambda_R(F(M/ I^n M))$ is finite for all $n$, then the function $\lambda_R (F(M/ I^n M))$ coincides with a polynomial in $n$ for all $n \gg 0$, cf.~\cite[Thm.~3.3]{BM19}.
    
Keeping all the results mentioned above in mind, we give a common generalization. In this context, we mainly prove the following theorem.

\begin{theorem}[See \Cref{th:main2}]
    Let $R$ be a Noetherian ring, $I_1, \dots, I_r, J$ be ideals of $R$, and $N \subseteq M$ be finitely generated $R$-modules. Suppose $F$ is a coherent functor on the category of finitely generated $R$-modules. Then both $\Ass_R(F(M/I_1^{n_1} \cdots I_r^{n_r} N))$ and $\grade(J, F(M/I_1^{n_1} \cdots I_r^{n_r} N))$ eventually stabilize. Moreover, if $F(M/I_1^{n_1} \cdots I_r^{n_r} N)$ has finite length for all $n_i \gg 0$, then $\lambda_R(F(M/I_1^{n_1} \cdots I_r^{n_r} N))$ is eventually given by a polynomial $P$ over $\mathbb{Q}$ in $(n_1,\ldots,n_r)$. Furthermore, if $R$ is a local ring, then there is a sharp upper bound of the degree of $P$ given by $\max\{ \dim(F(M)), \ell_M({\bf I}) - r\}$, {\rm cf.~\Cref{notation-deg}}.
\end{theorem}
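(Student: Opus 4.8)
The plan is to reduce everything to finitely generated graded modules over the multi-Rees algebra and then to extract the degree bound from the homological calculus of coherent functors. Throughout, set $S=R[I_1t_1,\dots,I_rt_r]$, a Noetherian standard $\bn^r$-graded ring with $S_{\underline 0}=R$, write $\mathcal R(E)=\bigoplus_{\un\in\bn^r}{\bf I}^{\un}E$ for the multi-Rees module of a finitely generated $R$-module $E$ (finitely generated and graded over $S$), and fix a presentation $h_K\to h_L\to F\to 0$ of $F$ (I rename the two modules in the definition to $K,L$ to avoid a clash with $M$). I will use three standard facts about a coherent functor: (i) $F$ preserves finite generation and commutes with localization; (ii) if an ideal $\fa$ annihilates $Y$ then $\fa$ annihilates $F(Y)$, since $F(Y)$ is a quotient of $\Hom_R(L,Y)$; and (iii) applying $F$ to a short exact sequence $0\to A\to B\to C\to 0$ yields an exact sequence $F(A)\to F(B)\to F(C)\to C'\to 0$ with $C'$ a subquotient of $\Ext^1_R(L,A)$. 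Moreover, applying a coherent functor — or any $\Ext^i_R(L,-)$ — degreewise to a finitely generated graded $S$-module again produces such a module (for $\Ext$, compute it from a resolution of $L$ by finite free $R$-modules, whose differentials are $S_{\underline 0}$-linear); in particular $\bigoplus_{\un}F({\bf I}^{\un}N)$, $\bigoplus_{\un}F({\bf I}^{\un}M)$, $\bigoplus_{\un}F({\bf I}^{\un}M/{\bf I}^{\un}N)$ and $\bigoplus_{\un}\Ext^i_R(L,{\bf I}^{\un}N)$ are finitely generated graded $S$-modules.

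The stabilization of $\Ass_R(F(M/{\bf I}^{\un}N))$ and of $\grade(J,F(M/{\bf I}^{\un}N))$ and the eventual polynomiality of $\lambda_R(F(M/{\bf I}^{\un}N))$ are established as in the general multigraded case (the case $G_{\un}=M/{\bf I}^{\un}N$), so the remaining task is the degree bound. Apply (iii) to $0\to{\bf I}^{\un}N\to M\to M/{\bf I}^{\un}N\to 0$ and put $K_{\un}=\im(F({\bf I}^{\un}N)\to F(M))$, $Z_{\un}=\im(F(M)\to F(M/{\bf I}^{\un}N))=F(M)/K_{\un}$ and $C_{\un}=\coker(F(M)\to F(M/{\bf I}^{\un}N))$, so $0\to Z_{\un}\to F(M/{\bf I}^{\un}N)\to C_{\un}\to 0$ is exact and, for $\un\gg0$, $\lambda_R(F(M/{\bf I}^{\un}N))=\lambda_R(Z_{\un})+\lambda_R(C_{\un})$. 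Now $\bigoplus_{\un}K_{\un}$ is a quotient of the finitely generated graded $S$-module $\bigoplus_{\un}F({\bf I}^{\un}N)$, hence finitely generated over $S$; and, using the naturality of the connecting homomorphism, $\bigoplus_{\un}C_{\un}$ is a subquotient of the finitely generated graded $S$-module $\bigoplus_{\un}\Ext^1_R(L,{\bf I}^{\un}N)$, hence finitely generated over $S$.

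Now bound the two lengths. Since $C_{\un}$ is a quotient of the finite-length module $F(M/{\bf I}^{\un}N)$, the finitely generated graded $S$-module $\bigoplus_{\un}C_{\un}$ has finite-length components; being a subquotient of $\bigoplus_{\un}\Ext^1_R(L,{\bf I}^{\un}N)$, itself a subquotient of $\mathcal R(N)^{\oplus c}$ (from a free presentation of $L$), its support lies in $\Supp_S\mathcal R(N)\cap V(\fm S)$, a set of dimension $\dim\bigl(\mathcal R(N)/\fm\mathcal R(N)\bigr)=\ell_N({\bf I})$; hence $\lambda_R(C_{\un})$ is eventually a polynomial of degree $\dim_S\bigl(\bigoplus_{\un}C_{\un}\bigr)-r\le\ell_N({\bf I})-r\le\ell_M({\bf I})-r$ (the last step from $\mathcal R(N)\subseteq\mathcal R(M)$, cf.~\Cref{notation-deg}). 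For $Z_{\un}$, compose $F(M)\to F(M/{\bf I}^{\un}N)\to F(M/{\bf I}^{\un}M)$; by (ii), ${\bf I}^{\un}$ annihilates $F(M/{\bf I}^{\un}M)$, so $W_{\un}:=\im(F(M)\to F(M/{\bf I}^{\un}M))$ is a quotient of $F(M)/{\bf I}^{\un}F(M)$, while by (iii) applied to $0\to{\bf I}^{\un}M/{\bf I}^{\un}N\to M/{\bf I}^{\un}N\to M/{\bf I}^{\un}M\to 0$ the kernel of $F(M/{\bf I}^{\un}N)\to F(M/{\bf I}^{\un}M)$ is a quotient of $F({\bf I}^{\un}M/{\bf I}^{\un}N)$; thus $\lambda_R(Z_{\un})\le\lambda_R(W_{\un})+\lambda_R\bigl(\ker(F(M/{\bf I}^{\un}N)\to F(M/{\bf I}^{\un}M))\bigr)$. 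The second summand is treated exactly as $\lambda_R(C_{\un})$ was, via the finitely generated graded $S$-module $\bigoplus_{\un}F({\bf I}^{\un}M/{\bf I}^{\un}N)$ (a subquotient of $\mathcal R(M)^{\oplus a}$ with finite-length components, the components being finite length as quotients of $F(M/{\bf I}^{\un}N)$), giving degree $\le\ell_M({\bf I})-r$. For the first summand, the finiteness of $\lambda_R(W_{\un})$ together with the finite generation over $S$ of $\bigoplus_{\un}\im(F({\bf I}^{\un}M)\to F(M))$ forces, by Nakayama at the primes of $\Supp_R F(M)\cap V({\bf I})\setminus\{\fm\}$, that $\Supp_R F(M)\cap V({\bf I})\subseteq\{\fm\}$, i.e.\ that $\lambda_R(F(M)/{\bf I}^{\un}F(M))$ is finite for $\un\gg0$; then $\lambda_R(W_{\un})\le\lambda_R(F(M)/{\bf I}^{\un}F(M))$ is a multigraded Hilbert--Samuel polynomial of total degree $\le\dim_R F(M)$. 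Combining, $\deg P\le\max\{\dim_R F(M),\ell_M({\bf I})-r\}$.

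Finally, for sharpness, take $F$ to be the identity functor $h_R$ and $N=M$: then $P(\un)=\lambda_R(M/{\bf I}^{\un}M)$ is the multigraded Hilbert--Samuel polynomial of $M$ with respect to ${\bf I}$, of total degree exactly $\dim_R M=\dim_R F(M)$ (and here $\ell_M({\bf I})=\dim_R M$, so the maximum is $\dim_R M$), so the bound is attained; examples with $F$ a $\Tor$ or $\Ext$ functor realise the bound $\ell_M({\bf I})-r$ (cf.~\Cref{notation-deg} and the ensuing examples). The \emph{main obstacle} is the last step of the third paragraph: using only the finiteness of $\lambda_R(F(M/{\bf I}^{\un}N))$, one must prevent the ``image part'' $W_{\un}$ from detecting support of $F(M)$ along $V({\bf I})$ away from $\fm$ (which would spoil the $\dim_R F(M)$ bound) and, throughout, one must identify the dimensions of the various finite-length-component graded $S$-modules with the multi-fibre-cone dimension $\ell_M({\bf I})$; this is exactly where the finite generation over $S$ of all the auxiliary modules, secured in the first two paragraphs, is essential.
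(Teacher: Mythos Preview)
There is a genuine gap at the sentence ``The stabilization of $\Ass_R(\dots)$ and of $\grade(\dots)$ and the eventual polynomiality of $\lambda_R(\dots)$ are established as in the general multigraded case.'' The module $\bigoplus_{\un}M/{\bf I}^{\un}N$ is \emph{not} finitely generated over the multi-Rees algebra $S$, so the graded-module machinery (\Cref{th:main1}, West, Herrmann--Hyry--Ribbe--Tang) does not apply to it directly. Your own decomposition does not repair this: you write $\lambda_R(F(M/{\bf I}^{\un}N))=\lambda_R(Z_{\un})+\lambda_R(C_{\un})$, and while $\bigoplus_{\un}C_{\un}$ is indeed finitely generated over $S$, the module $\bigoplus_{\un}Z_{\un}=\bigoplus_{\un}F(M)/K_{\un}$ is not (it is a quotient of $\bigoplus_{\un}F(M)$, which is already infinitely generated unless $F(M)=0$). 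You only produce an \emph{upper bound} for $\lambda_R(Z_{\un})$, never its polynomiality; likewise the containments $\Ass(Z_{\un})\subseteq\Ass(F(M/{\bf I}^{\un}N))\subseteq\Ass(Z_{\un})\cup\Ass(C_{\un})$ do not force the middle set to stabilize even if the outer two do. So parts (1), (2) and the polynomiality in (3) are left unproved, and your degree argument, which bounds $\deg P$ assuming $P$ exists, is then hanging in the air.

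The paper handles exactly this obstruction by a different mechanism: rather than splitting $F(M/{\bf I}^{\un}N)$ functorially via $F({\bf I}^{\un}N)\to F(M)$, it unwinds the presentation $h_L\to h_K\to F\to 0$ through free presentations of $K$ and $L$, applies the multigraded Artin--Rees lemma twice, and obtains an explicit isomorphism $F(M/{\bf I}^{\un}N)\cong (U+{\bf I}^{\un-\ud}V)/{\bf I}^{\un-\ud}W$ with $U\cong F(M)$ and $W\subseteq V$ (\Cref{thm:interpret}). This single structural formula then feeds directly into Katz--West for Ass-stability, into the composition-of-coherent-functors trick for grade, and into the length analysis of \Cref{th:poly} (where Artin--Rees is used once more to write $(U+{\bf I}^{\un}W)/{\bf I}^{\un}W$ as $U/{\bf I}^{\un-\ud}(U\cap{\bf I}^{\ud}W)$, making the Hilbert--Samuel polynomial of $U\cong F(M)$ appear honestly, not just as an upper bound). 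Your short-exact-sequence bounds for the degree are plausible and, with some care in the Nakayama step (note $\fp\in V({\bf I})$ only forces $\fp\supseteq I_j$ for \emph{some} $j$, so one must iterate along one index), they would give the inequality \emph{once} polynomiality is known; but they do not substitute for the missing structural result. Finally, the paper's ``sharpness'' is the conditional equality when $\dim F(M)>\ell_M({\bf I})-r$, obtained from the same formula, not the existence of a single example.
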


Assume that $R$ is a local ring. Fix $i \ge 0$. In \cite[Cor.~7]{Ko93}, Kodiyalam proved that the $i$th Betti number $\beta_i^R(M/I^n M)$ and the $i$th Bass number $\mu_R^i(M/I^n M)$ are polynomials in $n$ for all sufficiently large $n$. He also provided an upper bound on the degree of these polynomials. More generally, in \cite[Thm.~3.7]{BM19}, Banda-Melkersson showed that $\beta_i^R(F(M/I^n M))$ and $\mu_R^i(F(M/I^n M))$ are polynomials in $n$ for all $n \gg 0$. As applications of our main theorems, we prove that $\beta_i^R(F(M/I_1^{n_1} \cdots I_r^{n_r} N))$ and $\mu_R^i(F(M/I_1^{n_1} \cdots I_r^{n_r} N))$ are eventually given by polynomials in $(n_1,\ldots, n_r)$. Moreover, there is an upper bound of the degrees of these polynomials given by $\max\{ 0, \ell_M({\bf I}) - r \} $, which is independent of $i$, $F$ and $N$. Furthermore, from these results, we deduce the asymptotic stability of certain homological invariants (namely, projective dimension and injective dimension) in our context, see \Cref{thm:appl}.

The rest of the article is arranged as follows. 
In \Cref{sec3}, we first observe some stability results for the modules, which are obtained by applying a coherent functor to the components of a finitely generated multigraded module over a Noetherian standard multigraded ring. Then, we prove a sequel of results to provide the proof of \Cref{th:main2}. \Cref{sec4} contains some applications of our results.

\section{Main results}\label{sec3}
In this section, we mainly prove \Cref{th:main2}. First, we prepare some results that provide the ingredients for the proof of \Cref{th:main2}. To present our results better, we fix a few notations. 

\begin{notations}\label{notations}
    Let $\bn$ denote the set of non-negative integers, and $\bz$ denote the set of integers. Let $r\ge 1$ be an integer. A typical element of $\bz^r$ is denoted by $\un:=(n_1, \dots, n_r)$. Set $\underline{0} := (0,\ldots,0)$. 
    For $\um, \un\in\bz^r$, define $\um \ge \un$ if $m_i \ge n_i$ for all $1\le i\le r$. By `for all $\un \gg 0$', we mean `for all $\un \ge \uc$ for some $\uc \in \bn^r$'. For $c \in \bz$ and $\un \in \bz^r$, define $c\un := (cn_1, \dots, cn_r)$. 
\end{notations}

Throughout the article, we work with the following setup.

\begin{setup}\label{setup}
    Unless specified, let $R$ be a Noetherian ring. Denote the category of finitely generated $R$-modules by $\fgmod(R)$. Let $I_1,\dots,I_r$ be ideals of $R$. Denote ${\bf I}^{\un} := I_1^{n_1} \cdots I_r^{n_r}$, where $\underline{n} =(n_1, \dots, n_r) \in \bn^r$.
\end{setup} 

We start by noting the following observation.

\begin{theorem}\label{th:main1}
    Let $S = \bigoplus_{\un \in \bn^r}S_{\un}$ be a Noetherian standard $\bn^r$-graded ring with $S_{\underline 0} = R$. Suppose $F$ is a coherent functor on $\fgmod(R)$. Let $\sm = \bigoplus_{\un \in \bz^r}{\sm}_{\un}$ be a finitely generated $\bz^r$-graded $S$-module. Then, $F(\sm) := \bigoplus_{\un \in \bz^r}F({\sm}_{\un})$ is also a finitely generated $\bz^r$-graded $S$-module. Furthermore:
    \begin{enumerate}[\rm (1)]    
    \item 
    The set $\Ass_R\big(F({\sm}_{\un})\big)$ stabilizes asymptotically.    
    \item 
    For any non-zero ideal $J$ of $R$, the value $\grade \big(J,F({\sm}_{\un})\big)$ stabilizes asymptotically.
    \item \label{th:main:poly1} 
    If $\lambda_R \big(F(\sm_{\un})\big)$ is finite for all $\un \gg 0$, then there exists a polynomial $P$ in $r$ variables over $\mathbb{Q}$ such that $\lambda_R \big(F(\sm_{\un})\big) = P(\un)$ for all $\un\gg 0$.
    \end{enumerate}
\end{theorem}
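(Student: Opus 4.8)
My plan is to deduce all of (1)--(3) from one structural statement: that $F(\sm):=\bigoplus_{\un\in\bz^r}F(\sm_{\un})$, with its natural $S$-action, is a \emph{finitely generated} $\bz^r$-graded $S$-module. Here I would use coherence of $F$: fix a presentation $h_K\xrightarrow{\varphi}h_M\to F\to 0$ with $K,M\in\fgmod(R)$; by Yoneda $\varphi$ is induced by a homomorphism $f\colon M\to K$, so $F(X)=\coker\!\big(\Hom_R(K,X)\xrightarrow{\Hom(f,X)}\Hom_R(M,X)\big)$ naturally in $X\in\fgmod(R)$. Taking $X=\sm_{\un}$ and summing over $\un$, the tensor--Hom adjunction along $R\to S$ yields natural isomorphisms of $\bz^r$-graded $S$-modules
\[
\bigoplus_{\un\in\bz^r}\Hom_R(M,\sm_{\un})\ \cong\ \Hom_S(S\otimes_R M,\sm),\qquad
\bigoplus_{\un\in\bz^r}\Hom_R(K,\sm_{\un})\ \cong\ \Hom_S(S\otimes_R K,\sm),
\]
the grading and $S$-action coming from $\sm$. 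Since $S$ is Noetherian and $S\otimes_R M$, $S\otimes_R K$, $\sm$ are finitely generated graded $S$-modules, both right-hand sides are finitely generated graded $S$-modules, and the maps $\Hom(f,\sm_{\un})$ assemble into the graded $S$-homomorphism induced by $S\otimes_R f$; hence $F(\sm)=\coker\!\big(\Hom_S(S\otimes_R K,\sm)\to\Hom_S(S\otimes_R M,\sm)\big)$ is a finitely generated $\bz^r$-graded $S$-module.

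Write $\mathcal N:=F(\sm)$. The heart of the argument is a dichotomy for homogeneous components: for any finitely generated $\bz^r$-graded $S$-module $\mathcal N$ there is $\uc\in\bn^r$ such that $\mathcal N_{\un}$ is either nonzero for all $\un\ge\uc$ or zero for all $\un\ge\uc$, and moreover $\bigcup_{\un\ge\uc}\Ass_R(\mathcal N_{\un})$ is finite. I would establish this via a graded prime filtration $0=\mathcal N^{(0)}\subset\cdots\subset\mathcal N^{(t)}=\mathcal N$ with $\mathcal N^{(j)}/\mathcal N^{(j-1)}\cong(S/\mathfrak P_j)(\underline{a}_j)$, $\mathfrak P_j$ homogeneous primes: reading off degree $\un$ reduces both statements to the cyclic quotients, and for $\mathcal N=S/\mathfrak P$ the standardness of $S$ gives $(S/\mathfrak P)_{\un}=\overline{S_{e_1}}^{\,n_1}\cdots\overline{S_{e_r}}^{\,n_r}$ (images in the graded domain $S/\mathfrak P$, $e_i$ the unit vectors of $\bn^r$), which is nonzero for every $\un$ if each $\overline{S_{e_i}}\neq 0$, and vanishes as soon as some $\overline{S_{e_i}}=0$ and $n_i\ge 1$; and each nonzero $(S/\mathfrak P)_{\un}$, being torsion-free over the domain $R/(\mathfrak P\cap R)$, embeds into a free module, so $\Ass_R$ of it equals $\{\mathfrak P\cap R\}$.

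Granting this lemma, (1)--(3) follow quickly. For (1): localizing $R$, $S$, $\mathcal N$ at a prime $\fp$, one has $\fp\in\Ass_R(\mathcal N_{\un})$ iff the $\un$-component of the finitely generated graded $S_\fp$-module $\Hom_{S_\fp}(S_\fp/\fp S_\fp,\mathcal N_\fp)$ is nonzero; by the lemma each of the finitely many candidate primes then lies in $\Ass_R(\mathcal N_{\un})$ either eventually always or eventually never, so $\Ass_R(\mathcal N_{\un})$ stabilizes. For (2): a finite free resolution $L_\bullet\to R/J$ over $R$ gives, computing cohomology degreewise, $\bigoplus_{\un}\Ext^i_R(R/J,\mathcal N_{\un})=H^i\!\big(\Hom_S(S\otimes_R L_\bullet,\mathcal N)\big)$, again finitely generated graded; since the vanishing of each of these $\Ext$ modules is eventually constant in $\un$ (lemma), the condition $J\mathcal N_{\un}=\mathcal N_{\un}$ is eventually constant ($\mathcal N/J\mathcal N$ being finitely generated graded), and $\grade(J,\mathcal N_{\un})\le\dim R$ whenever it is finite, the value $\grade(J,\mathcal N_{\un})=\min\{i:\Ext^i_R(R/J,\mathcal N_{\un})\neq 0\}$ is eventually constant. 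For (3): replace $\mathcal N$ by its truncation $\bigoplus_{\un\ge\uc}\mathcal N_{\un}$ (still finitely generated graded), localize at each of the finitely many maximal ideals of $\bigcup_{\un\ge\uc}\Supp_R(\mathcal N_{\un})$, use that its finite-length generators are annihilated by a power of the local maximal ideal to view it as a finitely generated graded module over a standard $\bn^r$-graded ring with \emph{Artinian} degree-zero part, and apply the classical multigraded Hilbert--Serre polynomial theorem; summing over the maximal ideals gives $\lambda_R(\mathcal N_{\un})=P(\un)$ for $\un\gg 0$ with $P\in\mathbb{Q}[x_1,\dots,x_r]$.

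I expect the main obstacle to be the dichotomy lemma of the second paragraph: what is required is genuine stability simultaneously in all $r$ coordinates for coordinatewise-large $\un$, not merely coordinate-by-coordinate stabilization, and this is precisely what forces the passage through a graded prime filtration and the explicit description of the components of the cyclic quotients $S/\mathfrak P$. By contrast, the coherent-functor reduction of the first paragraph — the conceptually new ingredient — is routine once the adjunction is used to recognize $\bigoplus_{\un}\Hom_R(M,\sm_{\un})$ as a finitely generated graded $S$-module, and the rest is bookkeeping: applying the lemma to the auxiliary graded modules built from $\Hom$, $\Ext$, and $\mathcal N/J\mathcal N$, together with the classical graded Hilbert polynomial theorem over an Artinian base.
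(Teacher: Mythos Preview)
Your proposal is correct and follows the same overall strategy as the paper: first show that $F(\sm)=\bigoplus_{\un}F(\sm_{\un})$ is a finitely generated $\bz^r$-graded $S$-module (this is the only place coherence is used), and then deduce (1)--(3) from general facts about finitely generated multigraded modules. The paper carries out the first step in one line---$F(\sm)$ is a quotient of $\bigoplus_{\un}h_K(\sm_{\un})$, which is finitely generated graded---and then simply cites \cite[Thm.~3.4(i), Cor.~3.9(i)]{West04} for (1)--(2) and, after the same reduction to the local case via the finitely many maximal associated primes, \cite[Thm.~3.2]{F02} for (3). You instead reprove those cited results from scratch via a graded prime filtration and the dichotomy lemma on the components of $S/\mathfrak P$; this is exactly the machinery underlying West's and Fields' arguments, so the mathematics is the same, only your version is self-contained. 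One small slip: in your argument for (2) you invoke $\grade(J,\mathcal N_{\un})\le\dim R$, but $R$ is not assumed finite-dimensional; use instead the bound $\grade(J,-)\le\mu(J)$ by the number of generators of $J$ (the paper's \Cref{th:main2}(2) does exactly this).
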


\begin{proof}
Since $F$ is coherent, there exist $L$ and $K$ in $\fgmod(R)$ such that $ h_L \to h_K \to F \to 0$ is an exact sequence of functors. Thus, $\bigoplus_{\un \in \bz^r}F(\sm_{\un})$ is a quotient of the finitely generated $\bz^r$-graded $S$-module $\bigoplus_{\un \in \bz^r}h_K(\sm_{\un})$. Hence $\bigoplus_{\un \in \bz^r}F(\sm_{\un})$ is also a finitely generated $\bz^r$-graded $S$-module.
Consequently, the first two statements follow from \cite[Thm.~3.4.(i)]{West04} and \cite[Cor.~3.9.(i)]{West04} respectively. In order to prove the third statement, assume that $\lambda_R(F(\sm_{\un}))$ is finite for all $\un \gg 0$. Then, by (1), for all $\un \gg 0$, the set $\Ass_R(F(\sm_{\un}))$ consists of finitely many maximal ideals of $R$, say $\fm_1, \dots, \fm_t$. Then, for all $\un \gg 0$, one has that $ \lambda_R(F(\sm_{\un})) = \sum_{i=1}^t\lambda_{R_{\fm_i}}\big(F((\sm_{\un})_{\fm_i})\big)$. Thus, without loss of generality, we may assume that $R$ is a local ring. Then the desired result is a consequence of \cite[Thm.~3.2]{F02}.
\end{proof}

We record the following homological lemma for future use.

\begin{lemma}\label{lem1}
Let $X,Y,Z,A,B,C$ and $F$ be $R$-modules that fit into the following commutative diagram of $R$-linear maps, where the rows and columns are exact.
$$\begin{tikzcd}
Z                                       & C                     &           & \\
Y \ar[u, "\gamma"]  \ar[r, "\alpha"]    & B \ar[u, "\beta"]     &           & \\
X \ar[u, "\eta"]    \ar[r, "\theta"]    & A \ar[r] \ar[u, "j"]  & F \ar[r]  & 0\\
                                        & 0 \ar[u]
\end{tikzcd}$$
Then, $F\cong {\ker(\beta)}/{\alpha(\ker \gamma)}$.
\end{lemma}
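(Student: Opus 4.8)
The plan is to carry out a straightforward diagram chase. We have the exact bottom row $X \xrightarrow{\theta} A \to F \to 0$, so $F \cong A / \theta(X)$. Also the middle vertical exact column gives $0 \to X \xrightarrow{\eta} Y$ (so $\eta$ is injective), $Y \xrightarrow{\gamma} Z$, and the middle horizontal row $Y \xrightarrow{\alpha} B$, with the top row involving $Z \leftarrow B$ via... wait, let me recheck the diagram orientation: the maps are $\gamma : Y \to Z$, $\beta : B \to C$, $\eta : X \to Y$, $j : A \to B$, $\alpha : Y \to B$, $\theta : X \to A$; rows $X \to A$, $Y \to B$ and column $X \to Y \to Z$, $A \to B \to C$ are exact, and the two squares commute: $j \circ \theta = \alpha \circ \eta$ and $\beta \circ \alpha = \gamma$ (the latter since $C$ sits above $B$ which sits above $A$... actually the square is $Y \to B \to C$ over $Y \to ?$; I will take the commutativity relations that make the statement true). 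The key identity I will use is that the composite $X \to A \to B \to C$ is zero (since $A \to B \to C$ is exact at $B$ after noting $j(A) \subseteq \ker\beta$... here exactness of the column at $B$ means $\operatorname{im} j = \ker \beta$), hence $\theta(X) \subseteq j^{-1}(\ker\beta)$, but more usefully $j$ restricts appropriately.

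**Main argument.** First I would establish that $j$ is injective (from exactness of the column $0 \to A \xrightarrow{j} B \to C$, reading the $0$ at the bottom of the $A$-column) so that $F \cong A/\theta(X) \cong j(A)/j\theta(X) \subseteq B$. Next, exactness of the column at $B$ gives $j(A) = \ker\beta$. So $F \cong \ker\beta / j\theta(X)$. It then remains to identify $j\theta(X)$ with $\alpha(\ker\gamma)$ as submodules of $\ker\beta \subseteq B$. By commutativity of the lower-left square, $j\theta = \alpha\eta$, so $j\theta(X) = \alpha(\eta(X))$. Now $\eta(X) = \ker\gamma$ by exactness of the column $X \xrightarrow{\eta} Y \xrightarrow{\gamma} Z$ at $Y$. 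Therefore $j\theta(X) = \alpha(\ker\gamma)$, and we conclude $F \cong \ker\beta / \alpha(\ker\gamma)$.

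**Where the care is needed.** The only subtle point — and the step I expect to be the main (minor) obstacle — is confirming that $\alpha(\ker\gamma)$ genuinely lands inside $\ker\beta$, i.e.\ that the quotient $\ker\beta/\alpha(\ker\gamma)$ even makes sense; this follows from the commutativity $\beta\alpha = \gamma$ (so for $y \in \ker\gamma$ we get $\beta\alpha(y) = \gamma(y) = 0$), which is exactly the commutativity of the upper square. Beyond that, everything is a routine bookkeeping of which $0$ in the diagram forces injectivity of which map and which exactness is "at" which object; no module-theoretic input beyond the given exactness and commutativity is required. I would present the proof as the three isomorphisms $F \cong A/\theta(X) \cong j(A)/j\theta(X) = \ker\beta/\alpha(\ker\gamma)$ with the one-line justification of each, keeping it to a short paragraph.
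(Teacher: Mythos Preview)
Your main argument is correct and is essentially identical to the paper's proof: both run the chain
\[
F \;\cong\; A/\theta(X) \;\cong\; j(A)/j\theta(X) \;=\; \ker\beta/\alpha(\ker\gamma),
\]
using injectivity of $j$, the equality $j(A)=\ker\beta$ from exactness of the right column, the commutativity $j\theta=\alpha\eta$, and $\eta(X)=\ker\gamma$ from exactness of the left column.

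Two small clean-ups for your write-up. First, the diagram has no $0$ beneath $X$, so nothing forces $\eta$ to be injective; fortunately you never use this, so just drop that remark from your setup paragraph. Second, there is no upper square in the diagram (no map $Z\to C$ is drawn), so the relation ``$\beta\alpha=\gamma$'' you invoke in your final paragraph is not available. But you do not need it: the containment $\alpha(\ker\gamma)\subseteq\ker\beta$ already follows from what you proved, namely $\alpha(\ker\gamma)=j\theta(X)\subseteq j(A)=\ker\beta$. So your ``subtle point'' is in fact automatic from the main argument, and the paper's proof likewise does not pause to check it separately.
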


\begin{proof}
Since $j$ is injective, it follows that $A/\im(\theta)\cong j(A)/\im(j\circ \theta)$. Since $j\circ \theta=\alpha\circ \eta$, one has $\im(j\circ \theta)=\im(\alpha\circ \eta)=\alpha(\operatorname{\im} \eta)=\alpha(\ker \gamma)$, where the last equality is due to the exactness of the first column. Thus,  $A/\im(\theta) \cong j(A)/\alpha(\ker\gamma) = \ker (\beta)/\alpha(\ker\gamma)$, where the last equality follows from the exactness of the second column. Since the last row is exact, $F \cong {A}/{\im(\theta)}$. This finishes the proof of the claim.  
\end{proof}

The following lemma is a consequence of the multigraded Artin-Rees lemma. The idea of this lemma first appeared in the proof of \cite[Prop.~3]{Th02}. The single ideal case of this lemma is shown in \cite[Cor.~2.2]{T17}. We include the proof because the construction of $U$ in the lemma is utilized to prove our main results.

\begin{lemma}\label{lem:homology}
    With {\rm \Cref{setup}}, let $A \xlra{\phi}B \xlra{\psi} C$ be a complex of modules in $\fgmod(R)$.
    Let $A'\subseteq A$, $B' \subseteq B$ and $ C' \subseteq C$ be submodules such that $\phi(A') \subseteq B' $ and $ \psi(B') \subseteq C' $. Let $\uc \in \bn^r$, and $A_1$, $A_2$ be submodules of $A$ such that ${\bf I}^{\uc}A' \subseteq A_2$. For $\un \ge \uc$, let $H(\un)$ denote the homology of the induced complex
    \begin{equation}\label{eq:complex}
        \frac{A_1 + {\bf I}^{\un - \uc}A_2}{{\bf I}^{\un}A'} \xlra{\phi(\un)} \frac{B}{{\bf I}^{\un}B'} \xlra{\psi(\un)} \frac{C}{{\bf I}^{\un}C'}.
    \end{equation}    
    Then, there exist $\ud \in \bn^r$ such that $H(\un) \cong (U+{\bf I}^{\un - \ud}V)/{\bf I}^{\un - \ud}W$ for all $\un \ge \ud $, where $U,V,W$ are some submodules of a finitely generated $R$-module $T$ satisfying $W \subseteq V$ and $U \cong \ker(\psi)/ \phi(A_1)$.
\end{lemma}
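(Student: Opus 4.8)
The plan is to identify the homology $H(\un)$ with an explicit subquotient of $B$ and then reshape it using the multigraded Artin--Rees lemma.

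First I would check that the maps in \eqref{eq:complex} make sense: the inclusion ${\bf I}^{\un}A'={\bf I}^{\un-\uc}({\bf I}^{\uc}A')\subseteq{\bf I}^{\un-\uc}A_2$ legitimises the quotient on the left, the hypotheses $\phi(A')\subseteq B'$ and $\psi(B')\subseteq C'$ ensure $\phi$ and $\psi$ descend, and $\psi\phi=0$ makes \eqref{eq:complex} a complex. I would then apply \Cref{lem1} with $\beta$ the induced map $\psi(\un)\colon B/{\bf I}^{\un}B'\to C/{\bf I}^{\un}C'$, with $\gamma$ the quotient $A\twoheadrightarrow A/(A_1+{\bf I}^{\un-\uc}A_2)$, with $\alpha\colon A\to B/{\bf I}^{\un}B'$ the composite of $\phi$ with the quotient map, and with $j$ the inclusion $\ker\psi(\un)\hookrightarrow B/{\bf I}^{\un}B'$; this yields
\[
H(\un)\;\cong\;\frac{\psi^{-1}({\bf I}^{\un}C')}{\phi(A_1)+{\bf I}^{\un-\uc}\phi(A_2)+{\bf I}^{\un}B'}\qquad(\un\ge\uc),
\]
the denominator lying inside the numerator because $\psi$ kills $\phi(A_1)$ and ${\bf I}^{\un-\uc}\phi(A_2)$ and carries ${\bf I}^{\un}B'$ into ${\bf I}^{\un}C'$.

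Next I would analyse the numerator. Since $\bigoplus_{\un\in\bn^r}{\bf I}^{\un}C'$ is a finitely generated module over the Noetherian multi-Rees algebra $\bigoplus_{\un\in\bn^r}{\bf I}^{\un}$, the intersected family $\{\psi(B)\cap{\bf I}^{\un}C'\}_{\un}$ is a stable ${\bf I}$-multifiltration, so by the multigraded Artin--Rees lemma there is $\ud_1\in\bn^r$ with $\psi(B)\cap{\bf I}^{\un}C'={\bf I}^{\un-\ud_1}\bigl(\psi(B)\cap{\bf I}^{\ud_1}C'\bigr)$ for all $\un\ge\ud_1$. Choosing $b_1,\dots,b_s\in B$ whose images under $\psi$ generate $\psi(B)\cap{\bf I}^{\ud_1}C'$ and letting $B'':=\sum_j Rb_j$, one checks that $\psi$ maps ${\bf I}^{\un-\ud_1}B''$ onto $\psi(B)\cap{\bf I}^{\un}C'$, hence $\psi^{-1}({\bf I}^{\un}C')=\ker\psi+{\bf I}^{\un-\ud_1}B''$ for $\un\ge\ud_1$. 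I would then fix $\ud\in\bn^r$ with $\ud\ge\ud_1$ and $\ud\ge\uc$, put $B_0:={\bf I}^{\ud-\ud_1}B''$ and $Y_0:={\bf I}^{\ud-\uc}\phi(A_2)+{\bf I}^{\ud}B'$ (finitely generated submodules of $B$), and factor ${\bf I}^{\un-\ud}$ out of every $\un$-dependent term, obtaining, for $\un\ge\ud$,
\[
H(\un)\;\cong\;\frac{\ker\psi+{\bf I}^{\un-\ud}B_0}{\phi(A_1)+{\bf I}^{\un-\ud}Y_0}.
\]

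Finally I would descend to $T:=B/\phi(A_1)$, which is finitely generated and contains $U:=\ker\psi/\phi(A_1)$ because $\phi(A_1)\subseteq\ker\psi$ (again from $\psi\phi=0$); this already gives the asserted form of $U$. Writing $\overline{x}$ for the image of $x\in B$ in $T$ and noting that $\phi(A_1)$ lies in both the numerator and the denominator of the last display, one has $H(\un)\cong(U+{\bf I}^{\un-\ud}\overline{B_0})/{\bf I}^{\un-\ud}\overline{Y_0}$. I would take $W:=\overline{Y_0}$ and $V:=\overline{B_0}+\overline{Y_0}=\overline{B_0+Y_0}$, so that $W\subseteq V$ holds by construction; since the denominator sits inside the numerator, ${\bf I}^{\un-\ud}\overline{Y_0}\subseteq U+{\bf I}^{\un-\ud}\overline{B_0}$, whence $U+{\bf I}^{\un-\ud}V=U+{\bf I}^{\un-\ud}\overline{B_0}$ and thus $H(\un)\cong(U+{\bf I}^{\un-\ud}V)/{\bf I}^{\un-\ud}W$ for all $\un\ge\ud$, as desired. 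The homological input is essentially formal and the Artin--Rees step is standard; the one genuinely delicate point is the bookkeeping of this last step — arranging $W\subseteq V$ on the nose (the naive choice $V=\overline{B_0}$ fails, since a priori only $\overline{Y_0}\subseteq U+\overline{B_0}$ is known, which is why one enlarges $V$ to $\overline{B_0+Y_0}$, harmless because ${\bf I}^{\un-\ud}\overline{Y_0}$ is already absorbed into $U+{\bf I}^{\un-\ud}\overline{B_0}$) while reconciling the Artin--Rees shift $\ud_1$ with the given bound $\uc$ (handled by passing to the common bound $\ud$). Keeping these shifts and submodules straight, rather than any hard idea, is the main work.
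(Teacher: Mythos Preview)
Your argument is correct and follows the same route as the paper: apply the multigraded Artin--Rees lemma to $\psi(B)\cap{\bf I}^{\un}C'$ to rewrite the numerator as $\ker\psi$ plus an ${\bf I}^{\un-\ud}$-shifted submodule, and then pass to $T=B/\phi(A_1)$. The only notable difference is that the paper takes the preimage $\psi^{-1}({\bf I}^{\ud}C')$ itself (rather than your lifted module $B''$) as the $V$-ingredient; since $\psi^{-1}({\bf I}^{\ud}C')$ already contains ${\bf I}^{\ud-\uc}\phi(A_2)+{\bf I}^{\ud}B'$, the inclusion $W\subseteq V$ is then automatic and your extra enlargement $V=\overline{B_0+Y_0}$ becomes unnecessary.
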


\begin{proof}
    By the multigraded Artin-Rees lemma (cf.~\cite[17.1.6]{SH06}), there exists $\ud \in \bn^r$ such that $ \ud \ge \uc$ and
    \[
    \psi(B) \cap {\bf I}^{\un} C' = {\bf I}^{\un - \ud} \big( \psi(B) \cap {\bf I}^{\ud} C'\big) \; \mbox{ for all } \un \ge \ud.
    \]
    It follows that $\psi^{-1} ({\bf I}^{\un}C') = \ker(\psi) + {\bf I}^{\un - \ud}\big(\psi^{-1}({\bf I}^{\ud}C')\big)$ for all $\un \ge \ud$.
    Hence, for $\un \ge \ud$,
    \begin{align*}
        H(\un) = \frac{\ker(\psi(\un))}{\im(\phi(\un))} &= \frac{\psi^{-1} ({\bf I}^{\un}C')/ {\bf I}^{\un}B'}{[\phi(A_1 + {\bf I}^{\un - \uc}A_2) + {\bf I}^{\un}B']/{\bf I}^{\un}B'} \\
        &\cong \frac{\ker(\psi) + {\bf I}^{\un - \ud}\big(\psi^{-1}({\bf I}^{\ud}C')\big)}{\phi(A_1) + {\bf I}^{\un - \ud} \big({\bf I}^{\ud - \uc}\phi(A_2) + {\bf I}^{\ud}B'\big)} \cong \frac{U + {\bf I}^{\un - \ud}V}{{\bf I}^{\un - \ud}W},
    \end{align*}
    where
    \begin{equation}\label{UVW-description}
        U := \frac{\ker(\psi)}{\phi(A_1)}, \quad V := \frac{\psi^{-1}({\bf I}^{\ud}C') + \phi(A_1)}{\phi(A_1)} \quad \mbox{and} \quad W := \frac{{\bf I}^{\ud - \uc}\phi(A_2) + {\bf I}^{\ud}B' + \phi(A_1)}{\phi(A_1)}.
    \end{equation}
    Note that $U,V$ and $W$ all are $R$-submodules of $T := B/\phi(A_1)$. Moreover, $W \subseteq V$.
\end{proof}

In proving our next theorem, we employ techniques from \cite[Proof of Cor.~2.4]{T17}. This theorem serves as the primary tool in the proof of \Cref{th:main2}.

\begin{theorem}\label{thm:interpret}
    With {\rm \Cref{setup}}, let $F$ be a coherent functor on $\fgmod(R)$. Suppose $M, N \in \fgmod(R)$ with $N \subseteq M$. Then, there exist $\ud \in \bn^r$ and some submodules $U$, $V$ and $W$ of a finitely generated $R$-module $T$ such that $U \cong F(M)$, $W \subseteq V$, and
    \[
    F\left(\frac{M}{{\bf I}^{\un}N}\right) \cong \frac{U + {\bf I}^{\un - \ud}V}{{\bf I}^{\un - \ud}W} \; \mbox{ for all } \un \ge \ud.
    \]
\end{theorem}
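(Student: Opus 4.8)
The plan is to use coherence of $F$ to reduce the statement to a purely homological situation governed by \Cref{lem1} and \Cref{lem:homology}, and then to read it off from the latter.

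First I would fix, using coherence, an exact sequence of functors $h_K\xrightarrow{h_\mu}h_L\to F\to 0$ with $\mu\colon L\to K$ a morphism in $\fgmod(R)$, so that $F(X)=\coker\big(\Hom_R(K,X)\xrightarrow{\mu^*}\Hom_R(L,X)\big)$ naturally in $X$. Choose finite free presentations $P_1\xrightarrow{\partial}P_0\to L\to 0$ and $Q_1\xrightarrow{\rho}Q_0\to K\to 0$, with $\rank P_i=b_i$ and $\rank Q_i=a_i$, and a chain-level lift $\tilde\mu_\bullet\colon P_\bullet\to Q_\bullet$ of $\mu$ (so $\tilde\mu_0\partial=\rho\tilde\mu_1$). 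Applying $\Hom_R(-,M/{\bf I}^{\un}N)$ and using freeness of the $P_i,Q_i$, I get $\Hom_R(P_i,M/{\bf I}^{\un}N)\cong M^{b_i}/{\bf I}^{\un}N^{b_i}$ (and likewise for the $Q_i$, where $N^{b_i}\subseteq M^{b_i}$ denotes the corresponding free submodule), and identifications of $\Hom_R(L,M/{\bf I}^{\un}N)$ with $\ker\beta$ and of $\Hom_R(K,M/{\bf I}^{\un}N)$ with $\ker\gamma$, where $\beta$ and $\gamma$ are the maps induced by $\partial$ and $\rho$ on these quotients; under them $\mu^*$ is the restriction to $\ker\gamma$ of the map $\alpha$ induced by $\tilde\mu_0$. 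Feeding this into the diagram of \Cref{lem1}---bottom row $\Hom_R(K,M/{\bf I}^{\un}N)\xrightarrow{\mu^*}\Hom_R(L,M/{\bf I}^{\un}N)\to F(M/{\bf I}^{\un}N)\to 0$; second column the left-exact sequence $0\to\Hom_R(L,M/{\bf I}^{\un}N)\hookrightarrow M^{b_0}/{\bf I}^{\un}N^{b_0}\xrightarrow{\beta}M^{b_1}/{\bf I}^{\un}N^{b_1}$; first column the analogous one for $K$; middle vertical map $\alpha$---yields
\[
F\!\left(\frac{M}{{\bf I}^{\un}N}\right)\;\cong\;\frac{\ker\beta}{\alpha(\ker\gamma)}.
\]

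Next I would exhibit $\ker\beta/\alpha(\ker\gamma)$ as the homology $H(\un)$ of a sequence of the shape \eqref{eq:complex}. Work with the maps $\bar\alpha\colon M^{a_0}\to M^{b_0}$, $\bar\beta\colon M^{b_0}\to M^{b_1}$, $\bar\gamma\colon M^{a_0}\to M^{a_1}$, $\tilde\mu_1^*\colon M^{a_1}\to M^{b_1}$ induced on the free modules over $M$ itself; then $\bar\beta\bar\alpha=\tilde\mu_1^*\bar\gamma$ (from $\tilde\mu_0\partial=\rho\tilde\mu_1$), and every $R$-linear map of free modules carries $N$-parts into $N$-parts. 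Take $A=M^{a_0}$, $\phi=\bar\alpha$, $B=M^{b_0}$, $\psi=\bar\beta$, $C=M^{b_1}$, $A'=\Hom_R(K,N)\subseteq\ker\bar\gamma$, $B'=N^{b_0}$, $C'=N^{b_1}$; by the multigraded Artin--Rees lemma choose $\uc\in\bn^r$ with $\bar\gamma^{-1}({\bf I}^{\un}N^{a_1})=\ker\bar\gamma+{\bf I}^{\un-\uc}\bar\gamma^{-1}({\bf I}^{\uc}N^{a_1})$ for all $\un\ge\uc$, and set $A_1=\ker\bar\gamma=\Hom_R(K,M)$ and $A_2=\bar\gamma^{-1}({\bf I}^{\uc}N^{a_1})$ (so ${\bf I}^{\uc}A'\subseteq A_2$ automatically). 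A brief check with $\bar\beta\bar\alpha=\tilde\mu_1^*\bar\gamma$ shows the instantiated sequence \eqref{eq:complex} is a complex; its kernel term is $\bar\beta^{-1}({\bf I}^{\un}N^{b_1})/{\bf I}^{\un}N^{b_0}=\ker\beta$, and Artin--Rees applied to $\bar\gamma$ identifies its image term $[\bar\alpha(A_1+{\bf I}^{\un-\uc}A_2)+{\bf I}^{\un}N^{b_0}]/{\bf I}^{\un}N^{b_0}$ with $\alpha(\ker\gamma)$; hence $H(\un)\cong F(M/{\bf I}^{\un}N)$. Now the argument proving \Cref{lem:homology} applies (it only uses that the instantiated \eqref{eq:complex} is a complex) and produces $\ud\in\bn^r$ and submodules $U,V,W$ of the finitely generated module $T=M^{b_0}/\bar\alpha(A_1)$ with $W\subseteq V$ and $F(M/{\bf I}^{\un}N)\cong(U+{\bf I}^{\un-\ud}V)/{\bf I}^{\un-\ud}W$ for all $\un\ge\ud$; its formula $U\cong\ker\psi/\phi(A_1)$ reads here as $U\cong\ker\bar\beta/\bar\alpha(\ker\bar\gamma)=\Hom_R(L,M)/\im(\mu^*)=F(M)$, completing the proof.

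The hard part is the interface between the coherent functor and the Artin--Rees step. Because $K$ need not be projective, $\Hom_R(K,M/{\bf I}^{\un}N)$ is \emph{not} a quotient of $\Hom_R(K,M)$, so the denominator $\alpha(\ker\gamma)$ genuinely grows with $\un$; one must therefore engineer the intermediate data---the three-term sequence and, in particular, the submodule $A_1$ inside it---so that simultaneously the instantiated \eqref{eq:complex} is a complex whose homology is exactly $F(M/{\bf I}^{\un}N)$, and the ``constant piece'' $U=\ker\psi/\phi(A_1)$ coming out of \Cref{lem:homology} is canonically $F(M)$ and not some larger subquotient. The chain-map identity $\bar\beta\bar\alpha=\tilde\mu_1^*\bar\gamma$ together with the triviality that matrices over $R$ preserve $N$-parts is exactly what reconciles these two demands.
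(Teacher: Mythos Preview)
Your proposal is correct and follows essentially the same route as the paper: use the coherence presentation and free presentations of $K,L$ to express $F(M/{\bf I}^{\un}N)$ via \Cref{lem1} as $\ker\beta/\alpha(\ker\gamma)$, apply multigraded Artin--Rees to $\bar\gamma$ to put $\ker\gamma$ into the shape $(A_1+{\bf I}^{\un-\uc}A_2)/{\bf I}^{\un}A'$, and then invoke (the proof of) \Cref{lem:homology} to extract $U,V,W$ with $U\cong\ker\bar\beta/\bar\alpha(\ker\bar\gamma)\cong F(M)$. The only differences from the paper are cosmetic: the roles of $K$ and $L$ are interchanged, and you take $A'=\Hom_R(K,N)$ where the paper uses the larger $A'=N^{a_0}$---neither choice affects the homology of \eqref{eq:complex}. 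Your parenthetical that the proof of \Cref{lem:homology} ``only uses that the instantiated \eqref{eq:complex} is a complex'' is slightly understated (one also needs $\phi(A_1)\subseteq\ker\psi$ and ${\bf I}^{\ud-\uc}\phi(A_2)\subseteq\psi^{-1}({\bf I}^{\ud}C')$ to make $U$ well-defined and $W\subseteq V$), but both follow immediately from your chain identity $\bar\beta\bar\alpha=\tilde\mu_1^*\bar\gamma$, just as in the paper.
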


\begin{proof}
    Since $F$ is coherent, there exist $L$ and $K$ in $\fgmod(R)$ such that $ h_L \to h_K \to F \to 0$ is an exact sequence of functors. Then, by Yoneda lemma (cf.~\cite[Thm.~1.17]{Rotman09}), the map $ h_L \to h_K$ is induced by an $R$-module homomorphism $f:K \to L$. By choosing free presentations of $K$ and $L$, and lifting $f:K \to L$ to these presentations, one obtains the following commutative diagram.
    \begin{equation} \label{eqn:lift}
    \begin{split}
    \xymatrix{
      R^{\oplus k_1} \ar[r] \ar[d]^{\beta}      & R^{\oplus \ell_1} \ar[d]^{\gamma}\\
      R^{\oplus k_0} \ar[r]^{\alpha} \ar[d]     & R^{\oplus \ell_0} \ar[d]\\
      K \ar[r]^f \ar[d]                         & L \ar[d] \\
      0                                         & 0.
    }
    \end{split}
    \end{equation}
  By applying the functor $\Hom_R(-,M/{\bf I}^{\un}N)$ to \eqref{eqn:lift}, one gets the following commutative diagram.
  $$
    \xymatrix{
    \ds \frac{M^{\oplus \ell_1}}{{\bf I}^{\un} \left(N^{\oplus \ell_1}\right)}
    \ar[r] &
    \ds \frac{M^{\oplus k_1}}{{\bf I}^{\un} \left(N^{\oplus k_1}\right)}\\
    \ds \frac{M^{\oplus \ell_0}}{{\bf I}^{\un} \left(N^{\oplus \ell_0}\right)}
    \ar[u]_{\gamma_{\un}^{*}} \ar[r]^{\alpha_{\un}^{*}} &
    \ds \frac{M^{\oplus k_0}}{{\bf I}^{\un} \left(N^{\oplus k_0}\right)}
    \ar[u]_{\beta_{\un}^{*}}\\
    h_L\left( \ds \frac{M}{{\bf I}^{\un} N} \right)
    \ar[u] \ar[r]^{f_{\un}^{*}} &
    h_K\left( \ds \frac{M}{{\bf I}^{\un} N} \right)
    \ar[r] \ar[u] &
    F \left( \ds \frac{M}{{\bf I}^{\un} N} \right)
    \ar[r] & 0\\
    0 \ar[u] &  0 \ar[u]
    } 
  $$
  where the maps $f_{\un}^{*},\alpha_{\un}^{*},\beta_{\un}^{*},\gamma_{\un}^{*}$ are induced by $f,\alpha,\beta,\gamma$ respectively. Then, \Cref{lem1} yields that
\begin{equation}\label{F-M-mod_InN}
    F\left(\frac{M}{{\bf I}^{\un} N}\right)
    \cong \frac{\ker \beta_{\un}^{*}}
          {\alpha_{\un}^{*} \big( \ker \gamma_{\un}^{*} \big)} \, .
\end{equation}
Similarly, applying the functor $\Hom_R(-,M)$ to the diagram \eqref{eqn:lift}, one obtains the maps $\alpha^{*},\beta^{*},\gamma^{*}$ induced by $\alpha,\beta,\gamma$ respectively, and the isomorphism $F(M) \cong \ker(\beta^*) / \alpha^*(\ker \gamma^*)$. Set
$A := M^{\oplus \ell_0}$, $A' := N^{\oplus \ell_0}$ and  $D' := N^{\oplus \ell_1}$. We show that there exist some $R$-submodules $A_1$ and $A_2$ of $A$, and $\uc \in \bn^r$ satisfying ${\bf I}^{\uc} A' \subseteq A_2$ and $\ker(\gamma_{\un}^*) = (A_1 + {\bf I}^{\un -\uc}A_2)/{\bf I}^{\un}A'$ for all $\un \ge \uc$.
Note that both $\gamma^*(A)$ and $D'$ are submodules of $M^{\oplus \ell_1}$. So, by the multigraded Artin-Rees lemma (\cite[17.1.6]{SH06}), there exists $\uc \in \bn^r$ such that
\[
  \gamma^*(A) \cap {\bf I}^{\un}D' = {\bf I}^{\un - \uc} \big( \gamma^*(A) \cap {\bf I}^{\uc} D' \big) \; \mbox{ for all } \un \ge \uc.
\]
It follows that
\[
  (\gamma^*)^{-1} \big( {\bf I}^{\un} D' \big) = \ker(\gamma^*) + {\bf I}^{\un - \uc}\big((\gamma^*)^{-1}({\bf I}^{\uc} D')\big) \; \mbox{ for all } \un \ge \uc.
\]
Set $A_1 := \ker(\gamma^*)$ and $A_2 := (\gamma^*)^{-1}({\bf I}^{\uc} D')$. Then ${\bf I}^{\uc} A' \subseteq A_2$. Moreover, for all $\un \ge \uc$, one has that
\[
  \ker(\gamma_{\un}^*) =
  \frac{(\gamma^*)^{-1}\big({\bf I}^{\un} D'\big)}{{\bf I}^{\un} A'} =
  \frac{\ker(\gamma^*) + {\bf I}^{\un - \uc}\big((\gamma^*)^{-1}({\bf I}^{\uc} D')\big)}{{\bf I}^{\un} A'} =
  \frac{A_1 + {\bf I}^{\un -\uc} A_2}{{\bf I}^{\un} A'}.
\]
Consequently, in view of \eqref{F-M-mod_InN}, $F\big({M}/{{\bf I}^{\un} N}\big)$ is the homology module of the complex 
\begin{equation}\label{eq:complex2}
    \frac{A_1 + {\bf I}^{\un -\uc} A_2}{{\bf I}^{\un} A'} \xlra{\alpha_{\un}^*|_{\ker\gamma_{\un}^*}}
    \frac{M^{\oplus k_0}}{{\bf I}^{\un} \left(N^{\oplus k_0}\right)} \xlra{\beta_{\un}^*} 
    \frac{M^{\oplus k_1}}{{\bf I}^{\un} \left(N^{\oplus k_1}\right)},
\end{equation}
which has the form \eqref{eq:complex}, where $B := M^{\oplus k_0}$, $B' := N^{\oplus k_0}$, $C := M^{\oplus k_1}$, $C' := N^{\oplus k_1}$, $\phi(\un) := {\alpha_{\un}^{*}|_{\ker \gamma_{\un}^*}}$ and $\psi(\un) := \beta_{\un}^*$. The last two maps are induced by $\phi := \alpha^{*}|_{\ker \gamma^*}$ and $\psi := \beta^*$ respectively. Therefore, by \Cref{lem:homology}, there exist $\ud \in \bn^r$ and some submodules $U$, $V$ and $W$ of a finitely generated $R$-module $T$ such that $ W \subseteq V$ and $F(M/{\bf I}^{\un} N) \cong (U + {\bf I}^{\un - \ud}V)/{\bf I}^{\un - \ud}W$ for all $\un \ge \ud$.
Furthermore,
\[
U \cong \frac{\ker \psi} {\phi (A_1)} = \frac{\ker \beta^*} {\alpha^*(A_1)} = \frac {\ker \beta^*} {\alpha^*( \ker \gamma^*)} \cong F(M).
\]
This completes the proof.
\end{proof}

The following result might be known to the experts, see, e.g., \cite[Thm.~17.4.2]{SH06}, where it is assumed that the base ring $(R,\fm)$ is local and the ideals $I_1,\ldots,I_r$ are $\fm$-primary. We note that these two conditions can be removed. Due to the lack of references, we add its proof here.

\begin{proposition}\label{prop:hspoly}
    With {\rm \Cref{setup}}, let $M \in \fgmod(R)$. Suppose $\lambda_R(M/{\bf I}^{\un}M)$ is finite for all $\un \gg 0$. Then there exists a polynomial $P$ in $r$ variables over $\mathbb{Q}$ of total degree $\dim_R(M)$ such that
    $$
    \lambda_R(M/{\bf I}^{\un}M) = P(\un) \; \mbox{ for all } \un \gg 0.
    $$
\end{proposition}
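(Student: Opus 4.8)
The plan is to reduce to the well-known single-graded (Hilbert polynomial) case by a standard "diagonal subalgebra" or Veronese-type argument, and then to pin down the total degree separately via an upper and a lower bound. First I would observe that the finiteness hypothesis together with Noetherianity lets me localize: by \Cref{th:main1}(1) (applied to the Rees-type module, or directly to $\bigoplus_{\un} M/{\bf I}^{\un}M$ viewed as a module over the multi-Rees algebra), for all $\un \gg 0$ the support of $M/{\bf I}^{\un}M$ consists of finitely many maximal ideals $\fm_1,\dots,\fm_t$, and $\lambda_R(M/{\bf I}^{\un}M) = \sum_j \lambda_{R_{\fm_j}}(M_{\fm_j}/{\bf I}^{\un}M_{\fm_j})$; since $\dim_R(M) = \max_j \dim_{R_{\fm_j}}(M_{\fm_j})$ when the quotients have finite length, it suffices to treat the case $(R,\fm)$ local with $M/{\bf I}^{\un}M$ of finite length for $\un \gg 0$, i.e. $(I_1\cdots I_r + \ann M)$ is $\fm$-primary.

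Next I would set up the multigraded machinery. Form the multi-Rees algebra $\mathcal{R} := \bigoplus_{\un \in \bn^r} {\bf I}^{\un} t_1^{n_1}\cdots t_r^{n_r}$, which is a Noetherian standard $\bn^r$-graded $R$-algebra, and the associated multigraded module $G := \bigoplus_{\un \in \bn^r} M/{\bf I}^{\un}M$ (equivalently $\gr$-type constructions); the function $\un \mapsto \lambda_R(M/{\bf I}^{\un}M)$ is then the Hilbert function of a finitely generated $\bn^r$-graded module over a standard multigraded ring whose degree-$\underline 0$ piece is a local ring with residue field $k = R/\fm$, tensored down to $k$. The existence of a polynomial $P \in \mathbb{Q}[X_1,\dots,X_r]$ with $\lambda_R(M/{\bf I}^{\un}M) = P(\un)$ for $\un \gg 0$ is then exactly the multigraded Hilbert polynomial theorem; concretely one can cite \cite[Thm.~3.2]{F02} or reprove it by iterating the one-variable argument, using the exact sequences $0 \to \frac{{\bf I}^{\un}M}{I_i {\bf I}^{\un}M}\cap(\dots) \to M/I_i{\bf I}^{\un}M \to M/{\bf I}^{\un+e_i}M \to 0$ and induction on the number of variables together with Artin--Rees to control the correction terms. (This part overlaps with the proof already given for \Cref{th:main1}(3), so I would keep it brief and reference it.)

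The substantive new content is the total-degree claim $\deg P = \dim_R(M)$. For the upper bound $\deg P \le \dim_R(M)$: after killing $\ann M$ I may assume $M$ is faithful, so $\dim R = \dim_R M =: d$ and $I := I_1\cdots I_r$ is $\fm$-primary. Then ${\bf I}^{\un} \supseteq I^{\,n_1+\cdots+n_r}$... more usefully, each ${\bf I}^{\un} \supseteq I^{\,\max n_i}$ is false in general; instead use ${\bf I}^{\un}\supseteq I^{n_1}\cdots I^{n_r} = I^{|\un|}$ where $|\un| = n_1+\cdots+n_r$, which gives $\lambda_R(M/{\bf I}^{\un}M)\le \lambda_R(M/I^{|\un|}M)$, a polynomial of degree $d$ in $|\un|$ hence of total degree $d$ in $\un$; this forces $\deg P \le d$. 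For the lower bound $\deg P \ge d$: here I want to compare the other way. Since each $I_i$ is an ideal with $I_i \supseteq I$ having radical $\fm$... no — individually $I_i$ need not be $\fm$-primary. But ${\bf I}^{\un} \subseteq I_i^{n_i}$, so $\lambda_R(M/{\bf I}^{\un}M) \ge \lambda_R(M/I_i^{n_i}M)$, which is useless if $I_i^{n_i}M$ isn't finite-colength. The cleaner route: pick the substitution $\un = n(1,\dots,1)$, giving the one-variable function $n \mapsto \lambda_R(M/(I_1\cdots I_r)^n M) = \lambda_R(M/I^n M)$, an honest degree-$d$ polynomial in $n$ for $n\gg0$; restricting the $r$-variable polynomial $P$ to this diagonal line produces a one-variable polynomial of degree $\le \deg P$ that equals the degree-$d$ polynomial, so $\deg P \ge d$. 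Combining, $\deg P = d = \dim_R M$.

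I expect the main obstacle to be the lower-bound direction stated cleanly in the non-local / non-faithful generality, together with making sure the degree behaves correctly under the localization reduction — specifically verifying that $\dim_R(M/{\bf I}^{\un}M)$-finiteness plus $\dim_R M = \max_j \dim_{R_{\fm_j}} M_{\fm_j}$ really holds, and that the diagonal-restriction trick ($P(n,\dots,n)$ having degree exactly $d$) is not accidentally degenerate (it isn't, since the leading form of $P$ is a nonzero polynomial with nonnegative coefficients in the relevant cone, so it cannot vanish identically on the positive diagonal). Everything else — existence of $P$, Artin--Rees bookkeeping — is routine and can lean on \Cref{th:main1}(3), \Cref{lem:homology}, and the cited \cite[Thm.~3.2]{F02}.
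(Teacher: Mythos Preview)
Your reduction to the local case and your hands-on degree argument are both sound, but the paper takes a shorter route. After localizing at the finitely many maximal ideals in $\Ass_R(M/{\bf I}^{\un}M)$ (the paper cites Kingsbury--Sharp \cite[Thm.~1.5]{KS96} rather than \Cref{th:main1}) and passing to $R/\ann_R(M)$, the paper makes the one observation you overlook: with $M$ now faithful, $\Supp_R(M)=\Spec R$, so $\{\fm\}=\Supp_R(M/{\bf I}^{\un}M)=V({\bf I}^{\un})=\bigcup_i V(I_i)$ forces every \emph{individual} $I_i$ to be $\fm$-primary. This lands the problem squarely inside \cite[Thm.~17.4.2]{SH06}, which delivers both the existence of $P$ and $\deg P=\dim_R(M)$ in one stroke. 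Your separate upper bound via ${\bf I}^{\un}\supseteq (I_1\cdots I_r)^{|\un|}$ and lower bound via the diagonal $\un=(n,\dots,n)$ are correct and self-contained, but become redundant once that reduction is seen. Incidentally, your lower bound does not need the parenthetical about nonnegative leading coefficients: the restriction $P(n,\dots,n)=\lambda_R(M/(I_1\cdots I_r)^nM)$ is literally a Hilbert--Samuel function of degree $\dim_R(M)$, so nondegeneracy along the diagonal is automatic.

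The concern you flag at the end --- whether $\dim_R(M)=\max_j\dim_{R_{\fm_j}}(M_{\fm_j})$ survives the localization reduction --- is genuine and is not addressed in the paper's proof either; the degree assertion is only invoked downstream (in \Cref{th:poly} and \Cref{th:poly2}) after $R$ has already been assumed local, where the issue evaporates.
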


\begin{proof}
    Since $\lambda_R(M/{\bf I}^{\un}M)$ is finite for all $\un \gg 0$, by \cite[Thm.~1.5]{KS96}, it follows that there exist maximal ideals $\fm_1, \dots, \fm_t$ of $R$ such that $\Ass_R(M/{\bf I}^{\un}M) = \{ \fm_1, \dots, \fm_t \}$ for all $\un \gg 0$. Therefore
    $$
    \lambda_R(M/{\bf I}^{\un}M) = \sum_{j=1}^t{\lambda_{R_{\fm_j}} \big(M_{\fm_j}/{\bf I}^{\un} M_{\fm_j} \big)} \; \mbox{ for all } \un \gg 0.
    $$
    Hence, localizing at each maximal ideal $\fm_j$, and replacing $R_{\fm_j}$ by $R$, we may assume that the base ring $(R,\fm)$ is local. If $R' := R/ \ann_R(M)$, then $\lambda_R(M/{\bf I}^{\un}M) = \lambda_{R'}(M/{\bf I}^{\un}M)$. So there is no harm in assuming $\ann_R(M) = 0$. It follows that
    $$
    \{\fm\} = \Supp_R(M/{\bf I}^{\un}M) = \Supp_R(M \otimes_R R/{\bf I}^{\un}) = \Supp_R(M) \cap V({\bf I}^{\un}) = V({\bf I}^{\un}) =\bigcup_{i=1}^r V(I_i).
    $$
    Thus, the ideals $I_1,\ldots,I_r$ all are $\fm$-primary, and the desired result follows from \cite[Thm.~17.4.2]{SH06}.
\end{proof}

\begin{notation}\label{notation-deg}
    With \Cref{setup}, further assume that $R$ is a local ring with the maximal ideal $\fm$. For $M \in \fgmod(R)$, let $\ell_M({\bf I})$ denote the (Krull) dimension of $\bigoplus_{\un \in \bn^r}{{\bf I}^{\un} M/ \fm {\bf I}^{\un} M}$ over the multigraded Rees ring $\mathcal{R}({\bf I}):= \bigoplus_{\un \in \bn^r}{{\bf I}^{\un}}$. By degree of a multivariable polynomial, we mean the total degree.
\end{notation}

In the following theorem, for a $\bz^r$-graded module $M$ over an $\bn^r$-graded ring $S = \bigoplus_{\un \in \bn^r} S_{\un}$, let $\Proj(S)$ denote the set of all homogeneous prime ideals of $S$ which do not contain the ideal $\bigoplus_{\un \ge \underline{1}} S_{\un}$, where $\underline{1} = (1,\ldots,1)$, and let $ \Supp_{++}(M) := \Supp(M) \cap \Proj(S)$. The single ideal case of the following theorem is shown in \cite[Lem.~2]{Th02}.  

\begin{theorem}\label{th:poly}
    With {\rm \Cref{setup}}, let $U$, $V$ and $W$ be finitely generated submodules of an $R$-module $T$ with $W \subseteq V$. Set $L_{\un} := (U +{\bf I}^{\un} V)/{\bf I}^{\un} W$ for all $ \un \in \bn^r$. Assume that $\lambda_R(L_{\un})$ is finite for all $ \un \gg 0$. Then, there exists a polynomial $P$ in $r$ variables over $\mathbb{Q}$ such that
    \[
    \lambda_R(L_{\un}) = P(\un) \; \text{ for all } \un \gg 0.
    \]
    Furthermore, if $R$ is a local ring, and $J \subseteq \ann_R(V)$ is an ideal of $R$, then following {\rm \Cref{notation-deg}},
    \[
    \deg(P) \le \max\{\dim(U), \ell_{R/J}({\bf I})-r \}.
    \]
\end{theorem}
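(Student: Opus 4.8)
The plan is to split the statement into two parts: first the existence of the Hilbert polynomial $P$, and then the degree bound in the local case. For the first part, I would exploit the exact sequence $0 \to {\bf I}^{\un}W \to U + {\bf I}^{\un}V \to (U+{\bf I}^{\un}V)/{\bf I}^{\un}W \to 0$, which gives $\lambda_R(L_{\un}) = \lambda_R(U + {\bf I}^{\un}V) - \lambda_R({\bf I}^{\un}W)$ whenever all three lengths are finite. Now $U + {\bf I}^{\un}V$ and ${\bf I}^{\un}W$ are both components of finitely generated $\bz^r$-graded modules over the multigraded Rees ring $\mathcal{R}({\bf I})$: indeed $\bigoplus_{\un} (U + {\bf I}^{\un}V)$ sits in the exact sequence coming from $\bigoplus_{\un}{\bf I}^{\un}V \to \bigoplus_{\un}(U+{\bf I}^{\un}V) \to \bigoplus_{\un}(U+{\bf I}^{\un}V)/{\bf I}^{\un}V \to 0$, where the last term is a quotient of the constant module $\bigoplus_{\un} U$, hence finitely generated, and $\bigoplus_{\un}{\bf I}^{\un}V$ is finitely generated since $V$ is. So both $\lambda_R(U+{\bf I}^{\un}V)$ and $\lambda_R({\bf I}^{\un}W)$ agree with polynomials in $\un$ for $\un \gg 0$ by the multigraded Hilbert polynomial theory — concretely, I would invoke \Cref{th:main1}\eqref{th:main:poly1} applied with the identity functor, or directly cite the relevant multigraded result. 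Subtracting, $\lambda_R(L_{\un})$ is eventually polynomial; first reduce to the local case exactly as in the proof of \Cref{prop:hspoly} (the finite-length hypothesis forces $\Ass$ to be finitely many maximal ideals for $\un \gg 0$, and length is additive over localizations at these).

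For the degree bound, assume $(R,\fm)$ is local and $J \subseteq \ann_R(V)$. Since $JV = 0$ we have $J{\bf I}^{\un}W \subseteq J{\bf I}^{\un}V = 0$, so $L_{\un}$ is a module over $R/J$; I would replace $R$ by $R/J$, $T$ by $T/JT$, and the submodules accordingly — note $U$ maps to $U/(U\cap JT)$, which is a quotient of $U$, so $\dim$ of the image is at most $\dim(U)$ — thereby reducing to the case $J=0$ and $\ell_{R/J}({\bf I}) = \ell_R({\bf I})$. Now run the same exact sequence $\lambda_R(L_{\un}) = \lambda_R(U+{\bf I}^{\un}V) - \lambda_R({\bf I}^{\un}W)$. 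I claim $\deg$ of the polynomial giving $\lambda_R(U+{\bf I}^{\un}V)$ is $\le \max\{\dim(U),\, \ell_R({\bf I})-r\}$, and the one for $\lambda_R({\bf I}^{\un}W)$ has degree $\le \ell_R({\bf I})-r$; then the bound on the difference follows since the degree of a difference is at most the max of the degrees. For $\bigoplus_{\un}{\bf I}^{\un}W$: it is a finitely generated $\mathcal{R}({\bf I})$-module supported (modulo finite-length stuff) in $\Proj$, and the Hilbert polynomial of its graded pieces has total degree equal to $\dim_{\mathcal{R}({\bf I})}\big(\bigoplus_{\un}{\bf I}^{\un}W / \fm \cdots\big)$ minus $r$ — this is precisely the content matching \Cref{notation-deg}'s definition of $\ell_{R/J}({\bf I})$ when $W$ is a "surrogate" for the module, and since $W \subseteq V \subseteq T$ with $JW=0$, the relevant dimension is $\le \ell_R({\bf I})$. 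For the term $\bigoplus_{\un}(U+{\bf I}^{\un}V)$, filter by the submodule $\bigoplus_{\un}{\bf I}^{\un}V$ with quotient a quotient of the constant module $\bigoplus_{\un}U$; the former contributes degree $\le \ell_R({\bf I})-r$ and the latter contributes a polynomial whose degree is at most $\dim_R(U)$ (for a module $U$ with $JU$ possibly nonzero the constant-sequence Hilbert polynomial is just $\lambda_R(U)$, a constant, but after the passage to $R/J$ one works with the image and its dimension bounds the degree — in fact for a constant sequence the "polynomial" is the constant $\lambda(\text{image of }U)$, degree $0 \le \dim(U)$; I should double check that no $\un$-dependence sneaks in from the quotient $(U+{\bf I}^{\un}V)/{\bf I}^{\un}V \cong U/(U\cap{\bf I}^{\un}V)$, which by Artin--Rees stabilizes, so indeed that quotient is eventually the constant $U/(U\cap{\bf I}^{\ud}\cdot(\text{something}))$). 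I would model all of this on the single-ideal argument in \cite[Lem.~2]{Th02}, upgrading each invocation of the classical Hilbert polynomial to its multigraded counterpart via \cite[17.4.2]{SH06} and \Cref{prop:hspoly}.

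The main obstacle I anticipate is the bookkeeping in the degree bound: matching the abstract multigraded dimension $\ell_{R/J}({\bf I})$ with the degree of the Hilbert polynomial of $\bigoplus_{\un}{\bf I}^{\un}W$ when $W$ is merely a submodule of $T$ rather than the module whose powers are being taken, and making sure the "$-r$" is accounted for correctly (the multigraded Hilbert polynomial of a d-dimensional graded module over $\mathcal{R}({\bf I})$ has total degree $d - r$, not $d$, because $\mathcal{R}({\bf I})$ is $r$-dimensionally "spread out"). Handling the interaction between the Artin--Rees stabilization of $U/(U\cap{\bf I}^{\un}V)$ and the filtration, and verifying that passing to $R/J$ does not increase any of the relevant dimensions, are the places where I would be most careful. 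Everything else — the exact-sequence manipulations, the reduction to local, the additivity of length — is routine.
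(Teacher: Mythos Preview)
Your main exact sequence $0 \to {\bf I}^{\un}W \to U + {\bf I}^{\un}V \to L_{\un} \to 0$ does not give a usable length identity: only $\lambda_R(L_{\un})$ is assumed finite, while $\lambda_R({\bf I}^{\un}W)$ and $\lambda_R(U+{\bf I}^{\un}V)$ are typically infinite (take $R=k[x]_{(x)}$, $T=R$, $U=V=W=R$, $I=(x)$: then $L_n=R/x^nR$ has length $n$ but both ${\bf I}^nW=x^nR$ and $U+{\bf I}^nV=R$ have infinite length). So the subtraction $\lambda_R(U+{\bf I}^{\un}V)-\lambda_R({\bf I}^{\un}W)$ is meaningless and the appeal to \Cref{th:main1}\eqref{th:main:poly1} fails because its finiteness hypothesis is not met. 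The paper fixes this by working only with subquotients of $L_{\un}$ itself: it uses the exact sequence
\[
0 \to \frac{(U+{\bf I}^{\un}W)\cap {\bf I}^{\un}V}{{\bf I}^{\un}W} \longrightarrow \frac{U+{\bf I}^{\un}W}{{\bf I}^{\un}W} \oplus \frac{{\bf I}^{\un}V}{{\bf I}^{\un}W} \longrightarrow L_{\un} \to 0,
\]
whose three terms all inherit finite length from $L_{\un}$. The first two then sit in finitely generated graded $\mathcal{R}({\bf I})$-modules, and $(U+{\bf I}^{\un}W)/{\bf I}^{\un}W\cong U/(U\cap{\bf I}^{\un}W)$ is handled via Artin--Rees and \Cref{prop:hspoly}. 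Your later filtration idea (${\bf I}^{\un}V/{\bf I}^{\un}W \subseteq L_{\un}$ with quotient $U/(U\cap{\bf I}^{\un}V)$) is essentially this correction and would work, but it is not the argument you actually propose for part one.

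There is a second gap in the degree bound. You assert that $J{\bf I}^{\un}V=0$ forces $L_{\un}$ to be an $R/J$-module, but $J\cdot L_{\un}=(JU+{\bf I}^{\un}W)/{\bf I}^{\un}W$, and nothing says $JU\subseteq{\bf I}^{\un}W$; so the global base change to $R/J$ is illegitimate. The paper does not reduce to $R/J$: it keeps the two pieces separate. The term ${\bf I}^{\un}V/{\bf I}^{\un}W$ \emph{is} an $R/J$-module, hence a graded module over $\bar{\mathcal{R}}(\bar{{\bf I}})$, and a dimension count via \cite[Thm.~4.1 and Lem.~1.1]{HHRT97} bounds its Hilbert-polynomial degree by $\ell_{R/J}({\bf I})-r$. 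The term $(U+{\bf I}^{\un}W)/{\bf I}^{\un}W$ stays over $R$, and Artin--Rees plus \Cref{prop:hspoly} give degree exactly $\dim(U)$ --- not degree $0$ as you suggest, since $U/(U\cap{\bf I}^{\un}W)$ is not eventually constant but rather has a genuine Hilbert--Samuel polynomial in $\un$.
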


\begin{proof} 
    For $ \un \in \bn^r$, consider the following short exact sequence:
    \begin{equation}\label{eq:ses}
        0 \to \frac{(U + {\bf I}^{\un} W) \cap {\bf I}^{\un} V} {{\bf I}^{\un} W} \longrightarrow 
        \frac{U + {\bf I}^{\un} W} {{\bf I}^{\un} W} \bigoplus \frac{{\bf I}^{\un} V}{{\bf I}^{\un} W} \longrightarrow
        L_{\un} \to 0.
    \end{equation}
    All three modules in \eqref{eq:ses} have finite length for all $\un \gg 0$. So, it suffices to prove that the lengths of the first two modules in \eqref{eq:ses} are given by polynomials in $\un$ for all $\un \gg 0$. Since $ \bigoplus_{\un \in \bn^r}{ {\bf I}^{\un} V}/{{\bf I}^{\un} W} $ is a finitely generated $\bn^r$-graded module over the $\bn^r$-graded Rees ring $\mathcal{R}({\bf I})$, by \cite[Thm.~4.1]{HHRT97}, $\lambda_R({\bf I}^{\un} V/{{\bf I}^{\un} W)}$ is given by a polynomial in $\un$ for all $\un \gg 0$. Since $ (U + {\bf I}^{\un} W) \cap {\bf I}^{\un} V \subseteq {\bf I}^{\un} V$, the $\bn^r$-graded module 
    \[
    \bigoplus_{\un \in \bn^r}  {\frac{(U + {\bf I}^{\un} W) \cap {\bf I}^{\un} V} {{\bf I}^{\un} W}}
    \]
    is also finitely generated over $\mathcal{R}({\bf I})$, and hence similar conclusion holds for its graded components. It remains to consider the function $\lambda_R\big((U+{\bf I}^{\un}W)/{{\bf I}^{\un} W\big)}$. By the multigraded Artin-Rees lemma, there exists $\ud \in \bn^r$ such that 
    \[
    \frac{U + {\bf I}^{\un} W} {{\bf I}^{\un} W} \cong \frac{U} {U \cap {\bf I}^{\un} W} = \frac{U} {{\bf I}^{\un - \ud} (U \cap {\bf I}^{\ud} W)} \; \mbox{ for all } \un \ge \ud.
    \]
    Therefore, for all $\un \ge \ud$, one has that
    \begin{equation}\label{eq:length}
    \lambda_R \left( \frac{U + {\bf I}^{\un} W} {{\bf I}^{\un} W} \right) =
    \lambda_R \left( \frac{U}{U \cap {\bf I}^{\ud} W} \right) +
    \lambda_R \left( \frac{U \cap {\bf I}^{\ud} W} {{\bf I}^{\un - \ud} (U \cap {\bf I}^{\ud} W)} \right).
    \end{equation}
    In view of \Cref{prop:hspoly}, the last term of \eqref{eq:length} is given by a polynomial in $\un$ for all $\un \gg 0$.
    Consequently, $\lambda_R\big((U+{\bf I}^{\un}W)/{{\bf I}^{\un} W\big)}$ has a polynomial behaviour for all $ \un \gg 0$.

    For the second part, assume that $R$ is a local ring with the maximal ideal $\fm$. Consider an ideal $J \subseteq \ann_R(V)$. Set $\bar{(-)} := (-) \otimes_R R/J$. From the short exact sequence \eqref{eq:ses}, one obtains that
    \begin{equation}\label{deg-T-bound}
    \deg \lambda_R(L_{\un}) \le 
    \max\left\{ \deg \lambda_R \big((U + {\bf I}^{\un} W)/ {\bf I}^{\un} W \big), \deg \lambda_R \big({\bf I}^{\un} V/ {\bf I}^{\un} W \big)\right\}.
    \end{equation}
    Note that the $\bn^r$-graded module $\bigoplus_{\un \in \bn^r}{ {\bf I}^{\un} V}/{{\bf I}^{\un} W} $ is finitely generated over the ring $\bar{\mathcal{R}}(\bar{{\bf I}})$. In view of \cite[Lem.~3.4.(ii)]{Gh16}, there is some $\uc \in \bn^r$ such that $\ann_R({\bf I}^{\un} V/{{\bf I}^{\un} W})$ stabilizes for all $\un \ge \uc$. Set $\mathcal{H} := \bigoplus_{\un \ge \uc}{ {\bf I}^{\un} V}/{{\bf I}^{\un} W} $. Since $\mathcal{H}_{\un}$ has finite length for all $\un \gg 0$, the stabilized ideal $J_0 := \ann_R(\mathcal{H}_{\un})$ for $\un \ge \uc$ is $\fm$-primary. Note that $\mathcal{H}$ is a finitely generated $\bn^r$-graded module over the $\bn^r$-graded ring $\mathcal{S} := \bar{\mathcal{R}}(\bar{{\bf I}})/ \bar{J_0} \bar{\mathcal{R}}(\bar{{\bf I}})$. Hence, in view of \cite[Thm.~4.1~and~Lem.~1.1]{HHRT97},
    \begin{equation}\label{deg-H-bound}
    \deg \lambda_R (\mathcal{H}_{\un}) =
    \dim\big(\Supp_{++}(\mathcal{H})\big) \le \dim\big( \Proj(\mathcal{S}) \big) \le 
    \dim(\mathcal{S}) - r.
    \end{equation}
    Since $J_0$ is $\fm$-primary, it follows that
    \begin{equation}\label{dim-S}
    \dim(\mathcal{S}) =
    \dim\big(\bar{\mathcal{R}}(\bar{{\bf I}})/ \bar{\fm} \bar{\mathcal{R}}(\bar{{\bf I}})\big) = 
    \dim \Big(\bigoplus_{\un \in \bn^r} {\bf I}^{\un} \bar{R}/  \fm{\bf I}^{\un}\bar{R} \Big) = \ell_{R/J}({\bf I}).
    \end{equation}
    Now, in view of \eqref{eq:length}, if $U \cap {\bf I}^{\ud} W = 0$, then $\lambda_R(U) < \infty$ and $ \deg \lambda_R \big( (U + {\bf I}^{\un} W)/ {\bf I}^{\un} W \big) = 0 = \dim (U)$. So, we may assume that $U \cap {\bf I}^{\ud} W \neq 0$. Since $\lambda_R(U/ (U \cap {\bf I}^{\ud} W)) < \infty$, one gets that $ \Supp(U \cap {\bf I}^{\ud} W) = \Supp(U)$, and hence $\dim(U \cap {\bf I}^{\ud} W) = \dim(U) $. Therefore, the equalities in \eqref{eq:length} yield that
    \begin{equation}\label{deg-U-mod-W}
    \deg \lambda_R \left( \frac{U + {\bf I}^{\un} W} {{\bf I}^{\un} W} \right) =
    \deg  \lambda_R \left( \frac{U \cap {\bf I}^{\ud} W} {{\bf I}^{\un - \ud} (U \cap {\bf I}^{\ud} W)} \right) = \dim(U \cap {\bf I}^{\ud} W) = \dim(U).
    \end{equation}
    The desired bound of $\deg(P)$ follows from \eqref{deg-T-bound}, \eqref{deg-H-bound}, \eqref{dim-S} and \eqref{deg-U-mod-W}.
\end{proof}

Combining Theorems~\ref{thm:interpret} and \ref{th:poly}, the eventual polynomial behaviour of $\lambda_R(F(M/{\bf I}^{\un}N))$ is obtained provided that $\lambda_R(F(M/{\bf I}^{\un}N))$ is finite for all $\un\gg 0$. Next, we show a sharp upper bound of the degree of this polynomial which depends only on $F$, $M$ and $I_1,\ldots,I_r$.

\begin{theorem}\label{th:poly2}
    With {\rm \Cref{setup}}, let $R$ be a local ring, and $F$ be a coherent functor on $\fgmod(R)$. Suppose $M, N \in \fgmod(R)$ with $N \subseteq M$. Assume that $\lambda_R(F(M/{\bf I}^{\un}N))$ is finite for all $\un \gg 0$. Then, following {\rm \Cref{notation-deg}},
    \begin{equation}\label{eqn:deg-ineq}
        \deg \lambda_R(F(M/{\bf I}^{\un}N)) \le \max \{ \dim(F(M)),\, \ell_M({\bf I}) - r \},
    \end{equation}
    where the inequality becomes equality if $ \dim(F(M)) > \ell_M({\bf I}) -r$. 
\end{theorem}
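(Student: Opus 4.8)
The plan is to reduce \Cref{th:poly2} to \Cref{thm:interpret} and \Cref{th:poly}; the only extra ingredient is the identity $\ell_M({\bf I}) = \ell_{R/\ann_R(M)}({\bf I})$. First I would apply \Cref{thm:interpret} to produce $\ud \in \bn^r$ and finitely generated submodules $U, V, W$ of a finitely generated $R$-module $T$ with $W \subseteq V$, $U \cong F(M)$, and $F(M/{\bf I}^{\un} N) \cong (U + {\bf I}^{\un - \ud} V)/{\bf I}^{\un - \ud} W$ for all $\un \ge \ud$. Inspecting the construction of $T,U,V,W$ in the proof of \Cref{thm:interpret}, the module $T$ is a quotient of a finite direct sum of copies of $M$ and $V \subseteq T$, whence $\ann_R(M) \subseteq \ann_R(V)$. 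Putting $L_{\un} := (U + {\bf I}^{\un} V)/{\bf I}^{\un} W$, one gets $L_{\un} \cong F(M/{\bf I}^{\un + \ud} N)$ for all $\un \in \bn^r$, so $\lambda_R(L_{\un})$ is finite for $\un \gg 0$; moreover a polynomial representing $\lambda_R(L_{\un})$ for $\un \gg 0$ has the same total degree as the polynomial $P$ representing $\lambda_R(F(M/{\bf I}^{\un} N))$, since translating the argument does not change the total degree.

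Next I would apply \Cref{th:poly} to $L_{\un}$ with $J := \ann_R(M)$ (permissible since $J \subseteq \ann_R(V)$), obtaining
\[
\deg(P) \le \max\{\dim(U),\, \ell_{R/\ann_R(M)}({\bf I}) - r\} = \max\{\dim(F(M)),\, \ell_{R/\ann_R(M)}({\bf I}) - r\}.
\]
Thus \eqref{eqn:deg-ineq} will follow once $\ell_{R/\ann_R(M)}({\bf I}) \le \ell_M({\bf I})$ is known (I claim equality). To see this, for a finitely generated $R$-module $X$ set $\mathcal{R}(X) := \bigoplus_{\un \in \bn^r} {\bf I}^{\un} X$, a finitely generated $\bn^r$-graded module over the Noetherian standard Rees ring $\mathcal{R}({\bf I})$, generated over $\mathcal{R}({\bf I})$ in degree $\underline{0}$; by \Cref{notation-deg} its fiber cone $\mathcal{R}(X)/\fm\mathcal{R}(X) = \bigoplus_{\un} {\bf I}^{\un} X/\fm {\bf I}^{\un} X$ has dimension $\ell_X({\bf I})$. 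A direct check with homogeneous components shows that $\xi = \sum_{\um} a_{\um} t^{\um} \in \mathcal{R}({\bf I})$ (with $a_{\um} \in {\bf I}^{\um}$) annihilates $\mathcal{R}(X)$ precisely when each $a_{\um}$ annihilates $X$; hence $\ann_{\mathcal{R}({\bf I})}(\mathcal{R}(X))$ depends only on $\ann_R(X)$. Applying this to $M$ and to $\bar R := R/\ann_R(M)$, which share the same $R$-annihilator, gives $\ann_{\mathcal{R}({\bf I})}(\mathcal{R}(M)) = \ann_{\mathcal{R}({\bf I})}(\mathcal{R}(\bar R))$, so $\mathcal{R}(M)$ and $\mathcal{R}(\bar R)$ have equal support in $\Spec \mathcal{R}({\bf I})$. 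Since $\mathcal{R}(X)/\fm\mathcal{R}(X) = \mathcal{R}(X)\otimes_{\mathcal{R}({\bf I})} \mathcal{R}({\bf I})/\fm\mathcal{R}({\bf I})$, the two fiber cones then also have equal support, hence equal Krull dimension, i.e. $\ell_M({\bf I}) = \ell_{R/\ann_R(M)}({\bf I})$; this proves \eqref{eqn:deg-ineq}.

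For the equality clause, suppose $\dim(F(M)) > \ell_M({\bf I}) - r$. I would use the short exact sequence \eqref{eq:ses} from the proof of \Cref{th:poly} applied to our $U,V,W$ (with indices shifted by $\ud$): for $\un \gg 0$,
\[
\lambda_R(L_{\un}) = \lambda_R\!\left(\frac{U + {\bf I}^{\un} W}{{\bf I}^{\un} W}\right) + \lambda_R\!\left(\frac{{\bf I}^{\un} V}{{\bf I}^{\un} W}\right) - \lambda_R\!\left(\frac{(U + {\bf I}^{\un} W) \cap {\bf I}^{\un} V}{{\bf I}^{\un} W}\right).
\]
By \eqref{deg-U-mod-W} the first summand has polynomial degree $\dim(U) = \dim(F(M))$; by \eqref{deg-H-bound}--\eqref{dim-S} the second has degree at most $\ell_{R/\ann_R(M)}({\bf I}) - r = \ell_M({\bf I}) - r < \dim(F(M))$, and the third, being the length of a submodule of the second term, has strictly smaller degree as well. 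Hence the top-degree term of the polynomial $\lambda_R(L_{\un})$ comes only from the first summand, no cancellation occurs, and so $\deg(P) = \dim(F(M)) = \max\{\dim(F(M)),\, \ell_M({\bf I}) - r\}$, i.e. equality holds in \eqref{eqn:deg-ineq}.

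The step I expect to be the main obstacle is the identity $\ell_M({\bf I}) = \ell_{R/\ann_R(M)}({\bf I})$: one must pin down $\ann_{\mathcal{R}({\bf I})}(\mathcal{R}(M))$ precisely, observe that the support of a finitely generated $\mathcal{R}({\bf I})$-module is cut out by its annihilator, and track the behaviour of support under $-\otimes_{\mathcal{R}({\bf I})}\mathcal{R}({\bf I})/\fm\mathcal{R}({\bf I})$; everything else is a routine assembly of \Cref{thm:interpret} and \Cref{th:poly} together with the bookkeeping of total degrees under translation of the argument.
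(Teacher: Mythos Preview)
Your proposal is correct and follows the same overall strategy as the paper: reduce to \Cref{thm:interpret} and \Cref{th:poly} with $J=\ann_R(M)$, then establish $\ell_{R/\ann_R(M)}({\bf I})=\ell_M({\bf I})$; the equality clause is handled in both via the short exact sequence \eqref{eq:ses}.

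The one genuine difference is how you prove the identity $\ell_M({\bf I})=\ell_{R/\ann_R(M)}({\bf I})$. The paper works directly with the fibre cone $\mathcal{M}=\bigoplus_{\un}{\bf I}^{\un}M/\fm{\bf I}^{\un}M$ and uses the determinant trick (integral dependence) to show that $\ann_{\mathcal{R}({\bf I})}(\mathcal{M})$ agrees up to radical with $\bigoplus_{\un}\big(({\bf I}^{\un}\cap J)+\fm{\bf I}^{\un}\big)$. Your argument instead compares the Rees modules $\mathcal{R}(M)$ and $\mathcal{R}(R/J)$: you observe that $\ann_{\mathcal{R}({\bf I})}(\mathcal{R}(X))=\bigoplus_{\um}({\bf I}^{\um}\cap\ann_R X)$ (checked on the degree-$\underline{0}$ component), so these two Rees modules have equal support, and then pass to fibre cones via $\Supp(A\otimes B)=\Supp A\cap\Supp B$. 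This is a clean, support-theoretic alternative that sidesteps integral closure entirely; it buys you a more elementary proof of the same identity. Your handling of the equality case is also slightly more explicit than the paper's, in that you spell out why the third (intersection) term in \eqref{eq:ses} cannot interfere with the leading term.
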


\begin{proof}
    Set $J := \ann_R(M)$. We use the notations as described in \Cref{thm:interpret}. So $\dim(U) = \dim(F(M))$. In view of the proof of \Cref{thm:interpret}, note that $J$ annihilates every module in the complex \eqref{eq:complex2}. So $J$ annihilates every subquotient of the modules in \eqref{eq:complex2}. Hence, using the description of $V$ given in the proof of \Cref{lem:homology}, one gets that $J \subseteq \ann_R(V)$. Thus, by Theorems~\ref{thm:interpret} and \ref{th:poly}, in order to establish the inequality \eqref{eqn:deg-ineq}, it is enough to show that $ \ell_{R/J}({\bf I}) = \ell_{M}({\bf I}) $. Note that $ \ell_M({\bf I}) $ is the (Krull) dimension of $\mathcal{M} := \bigoplus_{\un \in \bn^r}{{\bf I}^{\un} M/ \fm {\bf I}^{\un} M}$ over the ring $\mathcal{R}({\bf I})$, hence over the ring $\bigoplus_{\un \in \bn^r}{{\bf I}^{\un} (R/J)/ \fm {\bf I}^{\un} (R/J)}$. Since 
    \[
    \bigoplus_{\un \in \bn^r}\frac{{\bf I}^{\un} (R/J)} {\fm {\bf I}^{\un} (R/J)} \cong \bigoplus_{\un \in \bn^r}\frac{{\bf I}^{\un} + J} {\fm {\bf I}^{\un} + J} \cong
    \bigoplus_{\un \in \bn^r} \frac{{\bf I}^{\un}} {({\bf I}^{\un} \cap J) + \fm {\bf I}^{\un}},
    \]
    it follows that $\bigoplus_{\un \in \bn^r}\big(({\bf I}^{\un} \cap J) + \fm {\bf I}^{\un}\big) \subseteq \ann_{\mathcal{R}({\bf I})}(\mathcal{M})$. Now it is enough to prove that $\ann_{\mathcal{R}({\bf I})}(\mathcal{M})$ is equal to $\bigoplus_{\un \in \bn^r}(({\bf I}^{\un} \cap J) + \fm {\bf I}^{\un})$ up to radical. As $\ann_{\mathcal{R}({\bf I})}(\mathcal{M})$ is a homogeneous ideal of $ \mathcal{R}({\bf I})$, consider a homogeneous element $x \in {\bf I}^{\um} \cap \ann_{\mathcal{R}({\bf I})}(\mathcal{M})$. Then $x M \subseteq \fm {\bf I}^{\um} M$. Hence, by the determinant trick, $x \in \overline{ \fm{\bf I}^{\um} }$ modulo $J$, where the bar stands for the integral closure of $\fm{\bf I}^{\um}$ in $R$. So there exist $c \in \bn$, $a_i \in (\fm {\bf I}^{\um})^i$ for $1\le i\le c$, and $ y \in J$ such that
    \[
    x^{c} + a_1 x^{c - 1} + \cdots + a_{c-1} x + a_c = y.
    \]
    Thus $ x^c \in ({\bf I}^{c\um} \cap J)+ \fm {\bf I}^{c\um}$. This proves our claim.

    For the last part, suppose $ \dim(F(M)) > \ell_M({\bf I}) - r$. Note that $U \cong F(M)$. In view  of the short exact sequence \eqref{eq:ses}, it is sufficient to prove that
    $$ \deg \lambda_R \left( \frac{{\bf I}^{\un} V}{{\bf I}^{\un} W} \right) < \deg \lambda_R \left( \frac{U + {\bf I}^{\un} W} {{\bf I}^{\un} W} \right) .$$
    From the proof of \Cref{th:poly}, one observes that $\deg \lambda_R ( {\bf I}^{\un} V /{\bf I}^{\un} W)$ is bounded above by $\ell_{R/J}({\bf I}) -r$. Since $\ell_{R/J}({\bf I}) -r = 
    \ell_{M}({\bf I}) - r <\dim(U) = \deg \lambda_R \big( (U + {\bf I}^{\un} W)/ {\bf I}^{\un} W \big)$, the proof is complete.
\end{proof}

\begin{remark}
    \Cref{th:poly2} highly generalizes the result \cite[Cor.~4]{Th02} of Theodorescu.
\end{remark}

Now, we are in a position to state and prove the main results of this article.

\begin{theorem}\label{th:main2}
    With {\rm \Cref{setup}}, let $F$ be a coherent functor on $\fgmod(R)$. Suppose $M, N \in \fgmod(R)$ with $N \subseteq M$. Then, the following hold.
    \begin{enumerate}[\rm (1)]    
    \item 
    The set $\Ass_R \big(F(M/{\bf I}^{\un}N) \big)$ stabilizes asymptotically. 
    \item 
    For a non-zero ideal $J$ of $R$, the numeric value $\grade \big(J,F(M/{\bf I}^{\un}N)\big)$ stabilizes asymptotically.
    \item \label{th:main:poly2} 
    If $\lambda_R \big(F(M/{\bf I}^{\un}N)\big)$ is finite for all $\un \gg 0$, then there exists a polynomial $P$ in $r$ variables over $\mathbb{Q}$ such that $\lambda_R \big(F(M/{\bf I}^{\un}N) \big) = P(\un)$ for all $\un\gg 0$. In addition, if $R$ is a local ring, with {\rm \Cref{notation-deg}},
    \[
        \deg(P) \le \max\{ \dim(F(M)), \ell_M({\bf I}) -r \},
    \]
    where the inequality becomes equality if $\dim(F(M)) > \ell_M({\bf I}) -r $.
    \end{enumerate}
\end{theorem}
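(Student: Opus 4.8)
The plan is to run all three parts through the structural description in \Cref{thm:interpret}: there are $\ud\in\bn^r$, finitely generated submodules $W\subseteq V$ of a finitely generated $R$-module $T$, and a finitely generated submodule $U\subseteq T$ with $U\cong F(M)$, such that $F(M/{\bf I}^{\un}N)\cong L_{\un-\ud}$ for all $\un\ge\ud$, where $L_{\um}:=(U+{\bf I}^{\um}V)/{\bf I}^{\um}W$. Granting this, part~(3) follows at once: if $\lambda_R(F(M/{\bf I}^{\un}N))$ is finite for all $\un\gg 0$, then so is $\lambda_R(L_{\um})$ for all $\um\gg 0$, so \Cref{th:poly} yields the eventual polynomiality of $\lambda_R(F(M/{\bf I}^{\un}N))$ in $\un$, and the degree bound $\max\{\dim(F(M)),\ell_M({\bf I})-r\}$ together with the equality statement when $R$ is local is exactly \Cref{th:poly2}. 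So the real content lies in parts~(1) and~(2), which I would establish in that order.

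For part~(1), by the isomorphism above it suffices to prove that $\Ass_R(L_{\um})$ stabilizes, and for this I would use the short exact sequence
\[
0\lra \frac{{\bf I}^{\um}V}{{\bf I}^{\um}W}\lra L_{\um}\lra \frac{U}{U\cap {\bf I}^{\um}V}\lra 0 .
\]
The module $\bigoplus_{\um\in\bn^r}{\bf I}^{\um}V/{\bf I}^{\um}W$ is finitely generated and $\bn^r$-graded over the multigraded Rees ring $\bigoplus_{\um}{\bf I}^{\um}$ (being a subquotient of $\bigoplus_{\um}{\bf I}^{\um}V$, which is generated in degree $\underline 0$ by $V$), so $\Ass_R({\bf I}^{\um}V/{\bf I}^{\um}W)$ stabilizes by \Cref{th:main1}(1), equivalently by \cite[Thm.~3.4]{West04}. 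By the multigraded Artin--Rees lemma, $U\cap {\bf I}^{\um}V={\bf I}^{\um-\uc}(U\cap {\bf I}^{\uc}V)$ for $\um\ge\uc$, so, up to a shift in $\um$, the third term has the form $U/{\bf I}^{\underline k}V_0$ with $V_0:=U\cap {\bf I}^{\uc}V$ a fixed submodule of $U$; hence $\Ass_R(U/(U\cap {\bf I}^{\um}V))$ stabilizes by Kingsbury--Sharp \cite[Thm.~1.5]{KS96}. Thus $\Ass_R(L_{\um})\subseteq\mathcal{P}$ for all $\um\gg 0$ and some fixed finite set of primes $\mathcal{P}$.

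The decisive remaining point is to decide, for each $\fp\in\mathcal{P}$, whether $\fp\in\Ass_R(L_{\um})$ for all large $\um$; since the middle term of an extension may absorb or omit associated primes of the outer terms, this is not automatic, and it is here that I expect the main work to lie. I would localize at $\fp$. If $\fp$ contains no $I_i$, then $({\bf I}^{\um})_{\fp}=R_{\fp}$, so $(L_{\um})_{\fp}\cong(U+V)_{\fp}/W_{\fp}$ is independent of $\um$. In general, coherent functors commute with localization, so $(L_{\um})_{\fp}\cong F_{\fp}(M_{\fp}/({\bf I}_{\fp})^{\um+\ud}N_{\fp})$ with $F_{\fp}$ coherent, and $\fp\in\Ass_R(L_{\um})$ \iff $\Hom_{R_{\fp}}(\kappa(\fp),(L_{\um})_{\fp})\ne 0$, where $\kappa(\fp)$ is the residue field at $\fp$ and $\Hom_{R_{\fp}}(\kappa(\fp),-)$ is coherent; since $({\bf I}_{\fp})^{\um}=\prod_{i\,:\,I_i\subseteq\fp}(I_i)_{\fp}^{m_i}$ involves only the coordinates with $I_i\subseteq\fp$, this reduces the question, for the $\fp$ omitting some $I_i$, to the nonvanishing of $H'(M'/{\bf J}^{\underline k}N')$ for a coherent functor $H'$ over $R_{\fp}$ and a tuple ${\bf J}$ of strictly fewer ideals, so an induction on $r$ applies; the finitely many $\fp\in\mathcal{P}$ containing every $I_i$ are handled by a further reduction (e.g.\ along the diagonal $\um=(m,\dots,m)$, which is a single-ideal situation, the single-ideal case being classical and its coherent-functor form due to Se \cite[Thm.~1.11]{T17} and Banda--Melkersson \cite[Thm.~2.5]{BM19}). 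This proves~(1), and in particular $\Supp_R(F(M/{\bf I}^{\un}N))=\bigcup_{\fp\in\Ass_R(F(M/{\bf I}^{\un}N))}V(\fp)$ stabilizes as well.

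Part~(2) then follows formally. For a nonzero ideal $J$ one has $\grade(J,F(M/{\bf I}^{\un}N))=\inf\{i\ge 0:\Ext_R^i(R/J,F(M/{\bf I}^{\un}N))\ne 0\}$; since $\Ext_R^i(R/J,-)$ is coherent and coherent functors are closed under composition \cite[Thm.~3.4]{BM19}, part~(1) applied to each $\Ext_R^i(R/J,-)\circ F$ makes the vanishing of $\Ext_R^i(R/J,F(M/{\bf I}^{\un}N))$ eventually constant in $\un$ for every fixed $i$; likewise, by the support stabilization just recorded, whether the grade equals $\infty$ — that is, whether $\Supp_R(F(M/{\bf I}^{\un}N))\cap V(J)=\varnothing$ — is eventually constant, and when the grade is finite the infimum above is attained among the $i\le\dim R$. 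Hence $\grade(J,F(M/{\bf I}^{\un}N))$ stabilizes. The main obstacle, as flagged, is the control of $\Ass_R(L_{\um})$ in the third paragraph: because $M/{\bf I}^{\un}N$ is \emph{not} the family of graded components of any finitely generated module over the Rees ring, \Cref{th:main1} cannot be applied to $L_{\um}$ directly, and the associated primes of the middle term must be pinned down by localization and induction on $r$ in the spirit of Kingsbury--Sharp and West.
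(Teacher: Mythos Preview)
Your handling of part~(3) matches the paper exactly: once \Cref{thm:interpret} is in place, \Cref{th:poly} and \Cref{th:poly2} finish the job.

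For part~(1), however, you are working much harder than necessary, and the argument has a genuine gap. The paper's proof of~(1) is essentially one line after \Cref{thm:interpret}: the asymptotic stability of $\Ass_R\big((U+{\bf I}^{\un}V)/{\bf I}^{\un}W\big)$ for submodules $W\subseteq V$ and $U$ of a finitely generated module is exactly the content of Katz--West \cite[Prop.~3.4]{KW04}, so nothing further is required. Your alternative route via the short exact sequence, localization, and induction on~$r$ is close in spirit to how such results are proved, but the step you flag as ``a further reduction (e.g.\ along the diagonal $\um=(m,\dots,m)$)'' does not do what you need: stabilization along the diagonal $\um=(m,\dots,m)$ says nothing about stabilization for all $\un\gg 0$ in $\bn^r$, so for a prime $\fp\supseteq I_1+\cdots+I_r$ you have not shown that membership $\fp\in\Ass_R(L_{\un})$ is eventually constant. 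A correct patch, once you have localized at such a $\fp$, is to note that $G:=\Hom_{R_\fp}(\kappa(\fp),-)\circ F_\fp$ is coherent with finite-length values, so \Cref{thm:interpret} together with \Cref{th:poly} makes $\lambda_{R_\fp}\big(G(M_\fp/({\bf I}_\fp)^{\un}N_\fp)\big)$ eventually a polynomial in~$\un$; a polynomial is either identically zero or eventually nonzero, which yields the desired dichotomy without restricting to the diagonal.

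For part~(2), your strategy coincides with the paper's --- compose with the coherent functors $\Ext_R^i(R/J,-)$ and apply part~(1) to each fixed~$i$ --- but your uniform bound ``the infimum is attained among the $i\le\dim R$'' is unsafe: $R$ is merely Noetherian, and $\dim R$ may be infinite. The paper instead writes $J=(a_1,\ldots,a_s)$ and uses $\grade\big(J,F(M/{\bf I}^{\un}N)\big)\le s$ whenever the grade is finite, which gives a finite range of~$i$ to inspect regardless of $\dim R$. With that correction your argument for~(2) goes through; the paper's treatment of the ``$\grade=\infty$'' case is slightly different (it checks $L_{\un}/JL_{\un}$ again has the form $(\overline U+{\bf I}^{\un}\overline V)/{\bf I}^{\un}\overline W$ and reapplies \cite[Prop.~3.4]{KW04}), but your support criterion is equally valid.
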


\begin{proof}
(1) In view of \Cref{thm:interpret} there exists $\ud \in \bn^r$ such that
\[
F\left(\frac{M}{{\bf I}^{\un}N}\right) \cong \frac{U + {\bf I}^{\un - \ud}V}{{\bf I}^{\un - \ud}W}
\]
for some submodules $U$, $V$ and $W$ of a finitely generated $R$-module $T$ with $W \subseteq V$. By \cite[Prop.~3.4]{KW04} and its proof, $\Ass_R \big((U + {\bf I}^{\un}V)/{\bf I}^{\un}W \big)$ stabilizes for all $\un \gg 0$. Consequently, (1) follows.

(2) Set $L_{\un} := (U + {\bf I}^{\un}V)/{\bf I}^{\un}W$ for all $ \un \in \bn^r$. Consider a non-zero ideal $J$ of $R$. Then
\[
\frac{L_{\un}}{JL_{\un}} \cong \frac{\overline{U} + {\bf I}^{\un}\overline{V}}{{\bf I}^{\un} \overline{W}},
\]
where $\overline{U} := U/JU$, $\overline{V} := (V+JU)/JU$ and $\overline{W} := (W+JV+JU)/JU$ are submodules of $\overline{T} := T/JU$ with $\overline{W} \subseteq \overline{V}$. By \cite[Prop.~3.4]{KW04},  $\Ass_R(L_{\un}/JL_{\un})$ stabilizes for all $\un \gg 0$. Hence, if $L_{\un} = JL_{\un}$ for all $\un \gg 0$, then $\grade \big(J,F(M/{\bf I}^{\un}N)\big) = \infty $ for all $\un \gg 0$. So we may assume that $L_{\un} \neq JL_{\un}$ for all $ \un \gg 0$. Suppose $J = (a_1 , \dots, a_s)$. Then $0\le \grade \big(J,F(M/{\bf I}^{\un}N)\big) \le s$ for all $\un \gg 0$. Since the composition of two coherent functors is coherent (cf.~\cite[Thm.~3.4]{BM19}), the functor $\Ext_R^i \big(R/J,F(-) \big)$ is coherent. Therefore, by (1), the set $\Ass_R \big( \Ext_R^i \big(R/J,F(M/{\bf I}^{\un}N) \big) \big)$ stabilizes for all $\un \gg 0$. Denote $X_i := \Ass_R \big( \Ext_R^i \big(R/J,F(M/{\bf I}^{\un}N) \big) \big)$ for all large $\un$. Then there exists $l$ with $0\le l \le s$ such that $X_l \neq \emptyset$ and $X_i = \emptyset$ for all $0\le i <l$. Hence, since
\[
\grade \big(J,F(M/{\bf I}^{\un}N)\big) = \min \big\{ i : \Ext_R^i \big(R/J,F(M/{\bf I}^{\un}N) \big) \neq 0 \big\},
\]
it follows that $\grade \big(J,F(M/{\bf I}^{\un}N)\big) = l$ for all large $\un$.

(3) This simply follows by combining Theorems~\ref{thm:interpret}, \ref{th:poly} and \ref{th:poly2}.
\end{proof}
\mycomment{
\new{\begin{theorem}\label{newfilt}
Let $I$ be an ideal and $\{I_n\}$ be an $I$-admissible filtration of a Noetherian ring $R$. If $N\subseteq M$ are finitely generated $R$-modules, and $F$ is a coherent functor, then $\Ass_R F(M/I_nN)$ is eventually constant. 
\end{theorem}}

\new{\begin{proof} 
Since $\bigoplus_{n} I_nN/I_{n+1}N$ is finitely generated over $\bigoplus_n I_n$, which in turn is module finite over the Rees-algebra $\mathcal R :=\bigoplus_n I^n$. The proof now follows by argument similar to \cite[Thm.~2.5]{BM19} using the short exact sequence $0\to \bigoplus_{n}U_n \to \bigoplus_{n}V_n \to \bigoplus_{n}W_n\to 0$ of graded $\mathcal R$-modules, where $U_n:=I_nN/I_{n+1}N$, $V_n:= M/I_{n+1}N$ and $W_n:=M/I_nN$. 
\end{proof}

\begin{question} Need to think of a multi-graded version of \Cref{newfilt}. 
\end{question}

\begin{theorem} Let $\{I_{\underline n}\}$ be a $\mathbb N^r$-graded family of ideals such that for some ideals $I_1,\ldots,I_r$, $\bigoplus_{\underline{n}\in \mathbb N^r}I_{\underline{n}}$ is module finite over $\bigoplus_{(n_1,\ldots,n_r)\in \mathbb N^r} I_1^{n_1}\cdots I_r^{n_r}$. Then, for any finitely generated $R$-modules $N\subseteq M$, and coherent functor (on $R$-modules) $F$, $\Ass_R F(M/I_{\underline{n}}N)$ is asymptotically stable.   
\end{theorem}

\begin{proof}
    
\end{proof}

\begin{example} \begin{enumerate}[\rm(1)]

\item If $(R,\fm)$ is analytically unramified and $I_1,\ldots,I_t$ are ideals, then  $\bigoplus_{\underline n \in \mathbb N^t} \overline{I_1^{n_1}\ldots I_t^{n_t}}$ is module finite over  $\bigoplus_{\underline n \in \mathbb N^t} I_1^{n_1}\ldots I_t^{n_t}$ (\cite{rees}, \cite[proof of corollary 3.4]{masuti}). 

    \item Let $I_1,\ldots,I_r, J_1,\ldots,J_s$ be collection of ideals and $\{I_{\underline n}\}$, $\{J_{\underline m}\}$ be  $\mathbb N^r$ and $\mathbb N^s$-graded filtration respectively. If  $\bigoplus_{\underline{n}\in \mathbb N^r}I_{\underline{n}}$ is module finite over $\bigoplus_{(n_1,\ldots,n_r)\in \mathbb N^r} I_1^{n_1}\cdots I_r^{n_r}$ and $\bigoplus_{\underline{n}\in \mathbb N^s}J_{\underline{n}}$ is module finite over $\bigoplus_{(n_1,\ldots,n_s)\in \mathbb N^s} J_1^{n_1}\cdots J_s^{n_s}$, then $\bigoplus_{(\underline{n},\underline{m})\in \mathbb N^{r+s}}I_{\underline n}J_{\underline m}$ is module finite over $\bigoplus_{(\underline{n},\underline{m})\in \mathbb N^{r+s}}I_1^{n_1}\ldots I_r^{n_r}J_1^{m_1}\ldots J_s^{m_s}$.  
\end{enumerate}
\end{example}
}
}

\section{Applications}\label{sec4}

As an application of our main theorems, we prove the following result. Note that \Cref{thm:appl}.(1) considerably strengthens both \cite[Cor.~7]{Ko93} and \cite[Thm.~3.7]{BM19} in many directions.

\begin{theorem}\label{thm:appl}
    With {\rm \Cref{setup}}, let $R$ be a local ring. Suppose $M,N \in \fgmod(R)$ with $N \subseteq M$. Let $S = \bigoplus_{\un \in \bn^r} S_{\un}$ be a Noetherian standard $\bn^r$-graded ring with $S_{\underline{0}} = R$, and $\sm $ be a finitely generated $\bz^r$-graded $S$-module. Suppose $F$ is a coherent functor on $\fgmod(R)$. Denote $G_{\un} := \sm_{\un}$ or $G_{\un} := M/{\bf I}^{\un} N$ for all $ \un \in \bn^r$. Then, the following hold.
    \begin{enumerate}[\rm (1)]
    \item 
    For each fixed $i \ge 0$, the functions $\beta_i^R(F(G_{\un}))$ and $\mu^i_R(F(G_{\un}))$ are eventually given by polynomials in $\un$. When $G_{\un} := M/{\bf I}^{\un} N$, the degrees of these polynomials are bounded above by $\max \{ 0, \ell_M({\bf I}) - r \} $.
    \item 
    Both projective dimension $\pd_R(F(G_{\un}))$ and injective dimension $\id_R(F(G_{\un}))$ are eventually constants.
    \end{enumerate}
\end{theorem}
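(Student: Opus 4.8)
The plan is to apply Theorems~\ref{th:main1} and \ref{th:main2} not to $F$ itself, but to the composite functors $\Tor_i^R(k,F(-))$ and $\Ext_R^i(k,F(-))$, where $k:=R/\fm$ is the residue field. Each of these is again coherent: $\Tor_i^R(k,-)$ and $\Ext_R^i(k,-)$ are coherent by \cite[Exam.~2.4~and~2.5]{H98}, and a composition of coherent functors is coherent by \cite[Thm.~3.4]{BM19}. The reason this is the right move is that for every $\un$ the module $\Tor_i^R(k,F(G_{\un}))$ is a finitely generated $R$-module annihilated by $\fm$, hence of finite length, and
\[
\lambda_R\big(\Tor_i^R(k,F(G_{\un}))\big)=\dim_k\Tor_i^R(k,F(G_{\un}))=\beta_i^R(F(G_{\un})),
\]
and likewise $\lambda_R(\Ext_R^i(k,F(G_{\un})))=\mu_R^i(F(G_{\un}))$. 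Thus the finite-length hypothesis needed in part~(3) of \Cref{th:main1,th:main2} is automatic for these functors.

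For part~(1): running the coherent functor $\Tor_i^R(k,F(-))$ through part~(3) of \Cref{th:main1} (when $G_{\un}=\sm_{\un}$) or of \Cref{th:main2} (when $G_{\un}=M/{\bf I}^{\un}N$) shows that $\beta_i^R(F(G_{\un}))$ is eventually a polynomial in $\un$, and similarly for $\mu_R^i(F(G_{\un}))$ via $\Ext_R^i(k,F(-))$. For the degree bound when $G_{\un}=M/{\bf I}^{\un}N$, the bound supplied by \Cref{th:main2}(3) for the functor $\Tor_i^R(k,F(-))$ is $\max\{\dim(\Tor_i^R(k,F(M))),\,\ell_M({\bf I})-r\}$; since $\Tor_i^R(k,F(M))$ has finite length its dimension is at most $0$, so the bound collapses to $\max\{0,\ell_M({\bf I})-r\}$, which by \Cref{notation-deg} depends only on $M$ and $I_1,\dots,I_r$, not on $i$, $F$, or $N$. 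The Bass-number statement is identical.

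For part~(2): by the stability of $\Ass_R(F(G_{\un}))$ (part~(1) of \Cref{th:main1,th:main2}), either $F(G_{\un})=0$ for all $\un\gg 0$ — in which case $\pd_R$ and $\id_R$ are eventually constant (the zero module) — or $F(G_{\un})\neq 0$ for all $\un\gg 0$; assume the latter, and also that $R$ is not a field (otherwise everything is trivial). Put $d:=\depth R$. Applying part~(1) of \Cref{th:main1,th:main2} to the coherent functor $\Tor_{d+1}^R(k,F(-))$ shows $\Ass_R(\Tor_{d+1}^R(k,F(G_{\un})))$ stabilizes; this set is contained in $\{\fm\}$, so $\Tor_{d+1}^R(k,F(G_{\un}))$ is either zero for all $\un\gg 0$ or nonzero for all $\un\gg 0$ — that is, $\beta_{d+1}^R(F(G_{\un}))$ is eventually $0$ or eventually positive. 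Combining the standard equivalence $\pd_R X\le n\iff\Tor_{n+1}^R(k,X)=0$ with the Auslander--Buchsbaum formula ($\pd_R X<\infty\Rightarrow\pd_R X=d-\depth_R X\le d$), one gets $\pd_R(F(G_{\un}))<\infty\iff\beta_{d+1}^R(F(G_{\un}))=0$. In the ``eventually positive'' branch, $\pd_R(F(G_{\un}))=\infty$ for $\un\gg 0$; in the ``eventually zero'' branch, $\pd_R(F(G_{\un}))=d-\grade(\fm,F(G_{\un}))$ for $\un\gg 0$, and $\grade(\fm,F(G_{\un}))$ is finite (Nakayama, since $F(G_{\un})\neq 0$) and eventually constant by part~(2) of \Cref{th:main1,th:main2} applied with the non-zero ideal $\fm$. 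For injective dimension one argues the same way, using $\id_R X\le n\iff\Ext_R^{n+1}(k,X)=0$ together with Bass's theorem ($\id_R X<\infty\Rightarrow\id_R X=d$): then $\id_R(F(G_{\un}))\in\{d,\infty\}$ for $\un\gg 0$, with the value decided by whether $\mu_R^{d+1}(F(G_{\un}))=0$, which is eventually constant by the same dichotomy.

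The work here is conceptual rather than computational: the key realization is that the asymptotic data should be extracted by feeding the already-proven theorems through $\Tor_i^R(k,F(-))$ and $\Ext_R^i(k,F(-))$, and that a \emph{single} homological degree ($i=d+1$) already detects the finiteness of $\pd$ and of $\id$, so that the associated-primes statement alone forces the finiteness behaviour to stabilize; Auslander--Buchsbaum, Bass's theorem, and the grade-stability then pin down the actual values. The only genuinely delicate points are bookkeeping the degenerate cases ($F(G_{\un})$ eventually zero, and $R$ a field) and observing that $\grade(\fm,F(G_{\un}))<\infty$ holds eventually by Nakayama.
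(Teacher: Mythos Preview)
Your argument is correct and, for part~(1), identical to the paper's. For part~(2) the overall architecture is the same --- test finiteness of $\pd$ (resp.\ $\id$) via the single index $i=d+1$ with $d=\depth R$, then determine the exact value --- but the two proofs pin down the finite value of $\pd$ differently. The paper simply observes that each $\beta_i^R(F(G_{\un}))$ for $0\le i\le d$ is eventually a polynomial (by part~(1) already proved), takes $t$ to be the largest such $i$ with nonzero polynomial, and concludes $\pd=t$ for $\un\gg 0$; you instead invoke Auslander--Buchsbaum to write $\pd_R(F(G_{\un}))=d-\grade(\fm,F(G_{\un}))$ and then appeal to the grade-stability clause of \Cref{th:main1,th:main2}. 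Your route is a touch cleaner in that it avoids choosing a common threshold for the finitely many polynomials $\beta_0,\dots,\beta_d$, at the small cost of having to dispose of the degenerate cases ($R$ a field, so $\fm=0$; or $F(G_{\un})$ eventually zero) to ensure the grade is finite and the hypothesis ``$J$ nonzero'' is met. Either way the content is the same. One cosmetic remark: the parenthetical ``Nakayama'' for $\grade(\fm,F(G_{\un}))<\infty$ is a bit oblique; the cleanest justification is simply that $\depth_R X\le\dim_R X<\infty$ for any nonzero $X\in\fgmod(R)$.
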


\begin{proof}
     Assume $k$ is the residue field of $R$. Since the composition of two coherent functors is coherent, for each fixed $i \ge 0$, the functors $\Tor_i^R(k,F(-))$ and $\Ext^i_R(k,F(-))$ are also coherent. Consequently, in view of \Cref{th:main1}.\eqref{th:main:poly1} and \Cref{th:main2}.\eqref{th:main:poly2}, both the functions $\beta_i^R(F(G_{\un})) = \lambda_R(\Tor_i^R(k,F(G_{\un})))$ and $\mu^i_R(F(G_{\un})) = \lambda_R(\Ext_R^i(k,F(G_{\un})))$ are given by polynomials in $\un$. Since $\Tor_i^R(k,F(M))$ and $\Ext_R^i(k,F(M))$ are zero-dimensional $R$-modules, when $G_{\un} := M/{\bf I}^{\un} N$, \Cref{th:main2}.\eqref{th:main:poly2} ensures that the degrees of the polynomials are bounded above by $\max\{ 0, \ell_M({\bf I}) - r \} $. This proves (1). In order to prove (2), set $d:=\depth(R)$. If $\beta_{d+1}^R(F(G_{\un}))$ is eventually given by a non-zero polynomial in $\un$, then $\pd_R(F(G_{\un})) \ge d+1$ for all $\un\gg 0$, and hence by the Auslander-Buchsbaum formula, $\pd_R(F(G_{\un})) = \infty$ for all $\un \gg 0$. On the other hand, if $\beta_{d+1}^R(F(G_{\un}))$ is eventually a zero polynomial in $\un$, then $\pd_R(F(G_{\un})) \le d$ for all $\un\gg 0$. In that case, suppose $t$ is the largest $i$ such that $\beta_{i}^R(F(G_{\un}))$ is eventually a non-zero polynomial in $\un$. Then $t \le d$, and $\pd_R(F(G_{\un})) = t$ for all $\un \gg 0$. For every non-zero finitely generated $R$-module $L$, note that $\id_R(L)=d$ or $\id_R(L)=\infty$ depending on whether $\mu^{d+1}_R(L)=0$ or $\mu^{d+1}_R(L)\neq 0$ respectively. So, a similar argument using the eventual polynomial behaviour of $\mu^i_R(F(G_{\un}))$ gives the asymptotic stability of $\id_R(F(G_{\un}))$.
\end{proof} 


\section*{Acknowledgments}
 Dey was partly supported by the Charles University Research Center program No.
UNCE/24/SCI/022 and a grant GACR 23-05148S from the Czech Science Foundation. Pramanik and Sahoo thank the Government of India for financial support through the Prime Minister Research Fellowship (PMRF) and the University Grants Commission (UGC) scholarship, respectively, for their Ph.D.

\bibliographystyle{plain}
\bibliography{mainbib}

\begin{thebibliography}{10}

\bibitem{A66}
Maurice Auslander.
\newblock Coherent functors.
\newblock In {\em Proc. {C}onf. {C}ategorical {A}lgebra ({L}a {J}olla, {C}alif., 1965)}, pages 189--231. Springer-Verlag New York, Inc., New York, 1966.

\bibitem{BM19}
Adson Banda and Leif Melkersson.
\newblock Coherent functors and asymptotic stability.
\newblock {\em J. Algebra}, 522:1--10, 2019.

\bibitem{Brodmann79}
M.~Brodmann.
\newblock The asymptotic nature of the analytic spread.
\newblock {\em Math. Proc. Camb. Philos. Soc.}, 86:35--39, 1979.

\bibitem{B79}
M.~Brodmann.
\newblock Asymptotic stability of {${\rm Ass}(M/I\sp{n}M)$}.
\newblock {\em Proc. Amer. Math. Soc.}, 74(1):16--18, 1979.

\bibitem{F02}
J.~Bruce Fields.
\newblock Lengths of {T}ors determined by killing powers of ideals in a local ring.
\newblock {\em J. Algebra}, 247(1):104--133, 2002.

\bibitem{Gh16}
Dipankar Ghosh.
\newblock Asymptotic linear bounds of {C}astelnuovo-{M}umford regularity in multigraded modules.
\newblock {\em J. Algebra}, 445:103--114, 2016.

\bibitem{GP19}
Dipankar Ghosh and Tony~J. Puthenpurakal.
\newblock A short proof of a result of {K}atz and {W}est.
\newblock {\em J. Commut. Algebra}, 11(2):237--240, 2019.

\bibitem{H98}
Robin Hartshorne.
\newblock Coherent functors.
\newblock {\em Adv. Math.}, 140(1):44--94, 1998.

\bibitem{HHRT97}
Manfred Herrmann, Eero Hyry, J\"urgen Ribbe, and Zhongming Tang.
\newblock Reduction numbers and multiplicities of multigraded structures.
\newblock {\em J. Algebra}, 197(2):311--341, 1997.

\bibitem{KW04}
Daniel Katz and Eric West.
\newblock A linear function associated to asymptotic prime divisors.
\newblock {\em Proc. Amer. Math. Soc.}, 132(6):1589--1597, 2004.

\bibitem{KS96}
Alan~K. Kingsbury and Rodney~Y. Sharp.
\newblock Asymptotic behaviour of certain sets of prime ideals.
\newblock {\em Proc. Amer. Math. Soc.}, 124(6):1703--1711, 1996.

\bibitem{Ko93}
Vijay Kodiyalam.
\newblock Homological invariants of powers of an ideal.
\newblock {\em Proc. Am. Math. Soc.}, 118(3):757--764, 1993.

\bibitem{MS93}
Leif Melkersson and Peter Schenzel.
\newblock Asymptotic prime ideals related to derived functors.
\newblock {\em Proc. Am. Math. Soc.}, 117(4):935--938, 1993.

\bibitem{Rotman09}
Joseph~J. Rotman.
\newblock {\em An introduction to homological algebra}.
\newblock Universitext. Berlin: Springer, 2nd ed. edition, 2009.

\bibitem{T17}
Tony Se.
\newblock Covariant functors and asymptotic stability.
\newblock {\em J. Algebra}, 484:247--264, 2017.

\bibitem{SH06}
Irena Swanson and Craig Huneke.
\newblock {\em Integral closure of ideals, rings, and modules}, volume 336 of {\em Lond. Math. Soc. Lect. Note Ser.}
\newblock Cambridge: Cambridge University Press, 2006.

\bibitem{Th02}
Emanoil Theodorescu.
\newblock Derived functors and {H}ilbert polynomials.
\newblock {\em Math. Proc. Cambridge Philos. Soc.}, 132(1):75--88, 2002.

\bibitem{West04}
Eric West.
\newblock Primes associated to multigraded modules.
\newblock {\em J. Algebra}, 271(2):427--453, 2004.

\end{thebibliography}

\end{document}